\setlist[enumerate]{
  label=(\thethm.\arabic*),
  before={\setcounter{enumi}{\value{equation}}},
  after={\setcounter{equation}{\value{enumi}}},
  itemsep=1ex
}
\setlist[itemize]{
  leftmargin=*,
  topsep=1ex,
  itemsep=1ex,
  label=$\circ$
}
\newtheorem*{thm-plain}{Theorem}
\newtheorem{thm}{Theorem}[section]
\newtheorem{lem}[thm]{Lemma}
\newtheorem{prp}[thm]{Proposition}
\newtheorem{cor}[thm]{Corollary}
\newtheorem{conj}[thm]{Conjecture}
\newtheorem{ques}[thm]{Question}
\numberwithin{equation}{thm}
\theoremstyle{definition}
\newtheorem{dfn}[thm]{Definition}
\newtheorem*{dfn-plain}{Definition}
\theoremstyle{remark}
\newtheorem{clm}[thm]{Claim}
\newtheorem{awlog}[thm]{Additional Assumption}
\newtheorem{ntn}[thm]{Notation}
\newtheorem*{rem-plain}{Remark}
\newcommand{\inv}{^{-1}}
\newcommand{\from}{\colon}
\newcommand{\lto}{\longrightarrow}
\newcommand{\x}{\times}
\newcommand{\inj}{\hookrightarrow}
\newcommand{\bij}{\xrightarrow{\,\smash{\raisebox{-.5ex}{\ensuremath{\scriptstyle\sim}}}\,}}
\newcommand{\isom}{\cong}
\newcommand{\defn}{\coloneqq}
\newcommand{\tensor}{\otimes}
\newcommand{\wt}{\widetilde}
\renewcommand{\d}{\mathrm d}
\newcommand{\del}{\partial}
\newcommand{\delbar}{\overline\partial}
\newcommand{\dual}{^{\smash{\scalebox{.7}[1.4]{\rotatebox{90}{\textup\guilsinglleft}}}}}
\newcommand{\ddual}{^{\smash{\scalebox{.7}[1.4]{\rotatebox{90}{\textup\guilsinglleft} \hspace{-.5em} \rotatebox{90}{\textup\guilsinglleft}}}}}
\newcommand{\acts}{\ \rotatebox[origin=c]{-90}{\ensuremath{\circlearrowleft}}\ }
\newcommand{\factor}[2]{\left. \raise 2pt\hbox{$#1$} \right/\hskip -2pt \raise -2pt\hbox{$#2$}}
\DeclareMathOperator{\re}{Re}
\newcommand{\set}[1]{\left\{ #1 \right\}}
\def\rd#1.{\lfloor{#1}\rfloor}
\def\rp#1.{\lceil{#1}\rceil}
\def\tw#1.{\langle{#1}\rangle}
\renewcommand{\O}[1]{\mathscr{O}_{#1}}
\newcommand{\Omegap}[2]{\Omega_{#1}^{#2}}
\newcommand{\Omegar}[2]{\Omega_{#1}^{[#2]}}
\newcommand{\T}[1]{\mathscr{T}_{#1}}
\newcommand{\can}[1]{\omega_{#1}}
\newcommand{\codim}[2]{\mathrm{codim}_{#1}(#2)}
\newcommand{\cc}[2]{\mathrm{c}_{#1}(#2)}
\newcommand{\PH}[1]{\mathrm{PH}_{#1,\R}}
\newcommand{\KK}[1]{\mathscr{K}^1_{#1,\R}}
\newcommand{\Cinfty}[1]{\mathscr{C}^\infty_{#1,\R}}
\def\Hnought#1.#2.{\mathit{\Gamma} \!\left( #1, #2 \right)}
\def\HH#1.#2.#3.{\mathrm{H}^{#1} \!\left( #2, #3 \right)}
\def\hh#1.#2.#3.{h^{#1} \!\left( #2, #3 \right)}
\def\RR#1.#2.#3.{R^{#1} #2_* #3}
\def\HHc#1.#2.#3.{\mathrm{H}_{\mathrm{c}}^{#1} \!\left( #2, #3 \right)}
\def\Hh#1.#2.#3.{\mathrm{H}_{#1} \!\left( #2, #3 \right)}
\def\Hom#1.#2.{\mathrm{Hom} \!\left( #1, #2 \right)}
\def\End#1.{\mathrm{End} \!\left( #1 \right)}
\def\sHom#1.#2.{\mathscr{H}\!om \!\left( #1, #2 \right)}
\def\Ext#1.#2.#3.{\mathrm{Ext}^{#1} \!\left( #2, #3 \right)}
\def\sExt#1.#2.#3.{\mathscr{E}\!xt^{#1} \!\left( #2, #3 \right)}
\newcommand{\kahler}{K{\"{a}}hler\xspace}
\newcommand{\cy}{Calabi--Yau\xspace}
\newcommand{\lt}{locally trivial\xspace}
\DeclareMathOperator{\Spec}{Spec}
\DeclareMathOperator{\divisor}{div}
\DeclareMathOperator{\Exc}{Exc}
\DeclareMathOperator{\Def}{Def}
\DeclareMathOperator{\Deflt}{Def^{lt}}
\DeclareMathOperator{\vol}{vol}
\newcommand{\eps}{\varepsilon}
\renewcommand{\theta}{\vartheta}
\renewcommand{\phi}{\varphi}
\newcommand{\N}{\ensuremath{\mathbb N}}
\newcommand{\Z}{\ensuremath{\mathbb Z}}
\newcommand{\Q}{\ensuremath{\mathbb Q}}
\newcommand{\R}{\ensuremath{\mathbb R}}
\newcommand{\C}{\ensuremath{\mathbb C}}
\renewcommand{\frm}{\mathfrak m}
 \newcommand{\frU}{\mathfrak U} 
\newcommand{\frX}{\mathfrak X} \newcommand{\frY}{\mathfrak Y}
  \newcommand{\sF}{\mathscr F}
  \newcommand{\cR}{\mathcal R}
  \newcommand{\cU}{\mathcal U}
\definecolor{forrest}{RGB}{81,133,49}
\definecolor{mydarkblue}{RGB}{10,92,153}
\title[The Kodaira problem for K{\"{A}}hler spaces with $\mathrm c_1 = 0$]{The Kodaira problem for K{\"{A}}hler spaces with vanishing first Chern class}
\author{Patrick Graf}
\address{Lehrstuhl f\"ur Mathematik I, Universit\"at Bayreuth, 95440 Bayreuth, Germany}
\email{\href{mailto:patrick.graf@uni-bayreuth.de}{patrick.graf@uni-bayreuth.de}}
\urladdr{\href{http://www.pgraf.uni-bayreuth.de/en/}{www.graficland.uni-bayreuth.de}}
\author{Martin Schwald}
\address{Fakult\"at f\"ur Mathematik, Universit\"at Duisburg--Essen, 45117 Essen, Germany}
\email{\href{mailto:martin.schwald@uni-due.de}{martin.schwald@uni-due.de}}
\urladdr{\href{http://www.esaga.uni-due.de/martin.schwald/}{www.esaga.uni-due.de/martin.schwald/}}
\date{May 6, 2021}
\thanks{The first author was partially supported by a DFG Research Fellowship.
The second author was partially supported by the DFG Collaborative Research Centre SFB/TR 45.}
\keywords{\kahler spaces, vanishing first Chern class, algebraic approximation, small projective deformations, locally trivial deformations}
\subjclass[2010]{32J27, 14E30, 32G05}
\begin{document}

\begin{abstract}
Let $X$ be a normal compact \kahler space with klt singularities and torsion canonical bundle.
We show that $X$ admits arbitrarily small deformations that are projective varieties if its locally trivial deformation space is smooth.
We then prove that this unobstructedness assumption holds in at least three cases: if $X$ has toroidal singularities, if $X$ has finite quotient singularities, and if the cohomology group $\HH2.X.\T X.$ vanishes.
\end{abstract}

\maketitle

\begingroup
\hypersetup{linkcolor=black}
\tableofcontents
\endgroup

\section{Introduction}

\subsection{The Kodaira problem}

%\the\textwidth, \the\textheight\
A compact complex manifold is projective if and only if it admits a \kahler form whose cohomology class is rational.
Thus it is a natural question to ask whether any compact \kahler manifold $X$ can be made projective by a (small) deformation of its complex structure.
This question is made precise by the notion of an \emph{algebraic approximation} of $X$, see \cref{def alg approx}.
The problem was already studied by Kodaira, who proved that every compact \kahler surface can be deformed to an algebraic surface~\cite{Kod63}, and it has been known as the \emph{Kodaira problem} ever since.

Recent progress gives a complete positive answer to the Kodaira problem in dimension three \cite{AlgApprox, ClaudonHoeringLinKahlerGroups, Lin17b}.
Also, examples of Voisin show that starting in dimension four there are compact \kahler manifolds without algebraic approximations \cite{Voi04,Voi06}.
Voisin's examples show how blowups can destroy the existence of algebraic approximations.
However, having the philosophy of the Minimal Model Program (MMP) in mind, this raises the question whether the Kodaira problem for $X$ should instead be posed for a (possibly singular) minimal model of $X$.
In this spirit, the following conjecture of Peternell becomes of interest:

\begin{conj}[Peternell] \label{peternell}
Let $X$ be a minimal compact \kahler space.
Then~$X$ admits an algebraic approximation.
\end{conj}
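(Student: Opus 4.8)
The plan is to follow the philosophy of the Minimal Model Program and reduce \cref{peternell} to the case where the first Chern class genuinely vanishes, where the problem becomes purely deformation-theoretic. Since $X$ is minimal, $\can X$ is nef, and stratifying by Kodaira dimension (granting abundance for \kahler spaces) gives three regimes. If $X$ is of general type, then $\can X$ is big and nef, so $X$ is Moishezon as well as \kahler and therefore already projective. If $0 < \kappa(X) < \dim X$, the Iitaka fibration has a base of strictly smaller dimension and general fibres with numerically trivial canonical class, so one would hope to conclude by induction, deforming base and fibres compatibly. The essential case is thus $c_1(X) = 0$, i.e.\ $\can X \equiv 0$; by abundance $\can X$ is then torsion and $X$ is a klt \cy space, which is exactly the setting of the main theorem.

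For this core case I would first pass to a quasi-\'etale cover to arrange $\can X \isom \O X$, and then restrict attention to \emph{locally trivial} deformations, which preserve the klt singularities and whose tangent space is $\HH1.X.\T X.$ for the reflexive tangent sheaf $\T X$. The decisive first step is to show that the locally trivial deformation space $\Deflt_X$ is smooth, i.e.\ that its Kuranishi space is unobstructed; this is a Bogomolov--Tian--Todorov statement for singular \cy spaces. Over the resulting smooth base I would then assemble the universal family and bring in the variation of Hodge structure on $\HH2.X.\C.$ together with a local Torelli theorem realising the period map as an immersion.

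The final step produces a rational \kahler class by a density argument. A sufficiently small deformation $X_t$ remains \kahler, and by the singular analogue of the Kodaira embedding theorem it is projective precisely when its open \kahler cone meets the rational cohomology $\HH2.X_t.\Q.$. Inside the period domain the loci along which a fixed integral class acquires type $(1,1)$ form a countable, dense union of hyperplane sections; intersecting these with the \kahler cone and transporting them back through the local Torelli immersion yields projective members of the family arbitrarily close to $X$, which is the sought-after algebraic approximation.

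The main obstacle is the smoothness of $\Deflt_X$. For singular spaces the classical Bogomolov--Tian--Todorov argument breaks down, and one is forced to work throughout with reflexive differentials and the locally trivial theory in place of all deformations. This unobstructedness is precisely the hypothesis isolated in the main theorem, and it is what the paper establishes only in the special situations of toroidal or finite quotient singularities, or when $\HH2.X.\T X.$ vanishes; removing it to settle \cref{peternell} in full remains the crux.
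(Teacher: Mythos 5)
The statement you are attempting is \cref{peternell}, which is an open conjecture; the paper does not prove it, and your proposal does not either --- it is a programme, and you correctly identify at the end that its decisive step is missing. Concretely, three of your steps are themselves unproven. First, the stratification by Kodaira dimension presupposes abundance for compact \kahler spaces (so that ``minimal'' yields $K_X \sim_\Q 0$ when $\cc1{}=0$, and a well-behaved Iitaka fibration otherwise); this is conjectural, which is why the paper simply takes $K_X \sim_\Q 0$ as a hypothesis. Second, the intermediate case $0 < \kappa(X) < \dim X$ cannot currently be handled by ``deforming base and fibres compatibly'': there is no such induction available, and the only related result the paper can cite is Lin's work on fibrations in abelian varieties over a curve. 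Third, and most importantly, the smoothness of $\Deflt(X)$ --- the locally trivial Bogomolov--Tian--Todorov statement --- is exactly \cref{BTT question}, which remains open; the paper establishes it only for quotient singularities, toroidal singularities, or under the vanishing $\HH2.X.\T X. = 0$, and everything downstream in your argument is conditional on it.

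On the core case itself, your final density step also diverges from what the paper actually does, in a way that matters. You invoke a local Torelli theorem and the density of Noether--Lefschetz loci in the period domain, but no local Torelli theorem is available for general klt \cy spaces (the period map on $\HH2.X.\C.$ need not be an immersion, e.g.\ when $b_2$ is small relative to $\dim \HH1.X.\T X.$). The paper instead passes to a simultaneous resolution $\frY \to \frX$ of the \lt family, perturbs the pulled-back \kahler class by a small multiple of a relatively ample class to make it \kahler on the resolution, applies the Green--Voisin criterion there (which requires the surjectivity of $\alpha_{[\omega]} \from \HH1.X.\T X. \to \HH2.X.\O X.$, proved via a Hard Lefschetz argument using the trivialization $\T X \isom \Omegar X{n-1}$ by a global generator of $\can X$), and then descends projectivity from $Y_t$ to $X_t$ via Namikawa's projectivity criterion for Moishezon \kahler spaces with rational singularities. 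That route avoids any Torelli-type input and is what makes the singular case tractable.
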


\noindent Related positive results in higher dimensions are still scarce \cite{Lin16}, and from the MMP point of view no natural extension of this conjecture to the case $\kappa(X) = -\infty$ is expected to hold \cite{MoriKodaira}.

\subsection{The case of Kodaira dimension zero}

When studying the Kodaira problem in higher dimensions, it is natural to aim at the case of Kodaira dimension zero first.
For a compact \kahler \emph{manifold} $X$ with trivial canonical bundle $K_X \sim 0$, the existence of an algebraic approximation can be proved by a combination of three well-known results:
\label{stepsABC}
\begin{itemize}[label=\textsf{(A)}]
\item[\textsf{(A)}] The BTT (= Bogomolov--Tian--Todorov) theorem states that the deformations of~$X$ are unobstructed, i.e.~the semiuniversal deformation space $\Def(X)$ is smooth~\cite{Bogomolov78, Tian87, Todorov89}.
\item[\textsf{(B)}] By the Green--Voisin criterion it is then sufficient to prove that cupping with a \kahler class $[\omega]\in\HH1.X.\Omegap X1.$ induces a surjection $\HH1.X.\T X. \lto \HH2.X.\O X.$ \cite[Prop.~5.20]{Voi03}.
\item[\textsf{(C)}] Using the assumption $K_X \sim 0$, the latter surjectivity is easily reduced to the Hard Lefschetz theorem.
\end{itemize}
More generally, if $X$ is a compact \kahler manifold with torsion canonical bundle $K_X \sim_\Q 0$, one passes to the index one cover $X_1 \to X$, on which the canonical bundle becomes trivial.
An algebraic approximation of $X_1$ then induces an algebraic approximation of $X$.
With Peternell's conjecture in mind, in the present work we study the case of a general minimal compact \kahler space $X$ of Kodaira dimension zero by extending the above line of reasoning.

\subsection{Main results}

If the conjectures of the \kahler MMP hold, studying \cref{peternell} for Kodaira dimension zero means considering normal compact \kahler spaces~$X$ with klt singularities such that the canonical class is torsion, $K_X \sim_\Q 0$.\footnote{
To be technically precise, we require that a reflexive tensor power of the canonical sheaf $\can X$ is trivial, i.e.~$\can X^{[m]} \isom \O X$ for some $m > 0$.}
As our first main result we establish that steps~\textsf{(B)} and~\textsf{(C)} above indeed generalize to this setting if one considers not all the deformations of a given $X$, but only the \emph{\lt} ones, see \cref{dfn lt def}.
This is done in Theorems~\labelcref{approx crit lt} and~\labelcref{hard lefschetz}, respectively.
Taken together, they yield the following.

\begin{thm}[Algebraic approximation for unobstructed spaces] \label{main unobstructed}
Let $X$ be a normal compact \kahler space with klt singularities and $K_X \sim_\Q 0$.
If the base of the semi\-universal \lt deformation $\frX \to \Deflt(X)$ is smooth, then $\frX \to \Deflt(X)$ is a strong algebraic approximation of $X$.
\end{thm}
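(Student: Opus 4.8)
The plan is to reconstruct, for the locally trivial deformation functor, the classical three-step argument \textsf{(A)}--\textsf{(C)} that resolves the Kodaira problem for a compact Kähler manifold with $K_X \sim 0$. In this dictionary the smoothness hypothesis on $\Deflt(X)$ takes over the role of step~\textsf{(A)}: it is precisely the unobstructedness that the Bogomolov--Tian--Todorov theorem furnishes in the smooth case, except that here it is assumed outright, its verification in concrete situations being the business of the paper's later results. Granting this, steps~\textsf{(B)} and~\textsf{(C)} are the contents of \cref{approx crit lt} and \cref{hard lefschetz}, and the proof amounts to feeding the output of the latter into the former.

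Concretely, I would first apply \cref{approx crit lt}, the locally trivial incarnation of the Green--Voisin criterion. Since $\Deflt(X)$ is smooth by hypothesis, this criterion should reduce the assertion that $\frX \to \Deflt(X)$ is a strong algebraic approximation to the single cohomological statement that contraction against a Kähler class $[\omega] \in \HH1.X.\Omegap X1.$ induces a surjection
\[
  \HH1.X.\T X. \lto \HH2.X.\O X.
\]
The group $\HH1.X.\T X.$ is the tangent space of $\Deflt(X)$, so smoothness turns the Kodaira--Spencer map into an isomorphism and makes the period-map argument underlying the criterion available for the locally trivial family.

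It then remains to establish the displayed surjectivity, and this is exactly \cref{hard lefschetz}, the generalisation of step~\textsf{(C)}. Here the hypotheses $K_X \sim_\Q 0$ and klt enter: they permit one to identify $\T X$ with a reflexive sheaf of differentials and to invoke Hodge theory on the singular Kähler space $X$, whereupon the displayed map becomes, under Serre duality, adjoint to a Lefschetz operator, so that its surjectivity follows from the Hard Lefschetz theorem. Combining \cref{approx crit lt} with \cref{hard lefschetz} then yields the theorem.

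The substantive difficulties thus reside in the two cited theorems, which I am free to assume; at the level of the present statement the argument is a formal synthesis. The one point genuinely requiring attention is the compatibility of the cup-product maps: one must confirm that the surjection supplied by \cref{hard lefschetz} is the very map whose surjectivity \cref{approx crit lt} demands, and that both are taken with respect to the same identification of $\HH1.X.\T X.$ with the tangent space to the locally trivial deformation functor. Once this bookkeeping is in place, the conclusion is immediate.
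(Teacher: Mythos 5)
Your proposal is correct and follows the paper's own proof essentially verbatim: pick a K\"ahler class, get surjectivity of $\alpha_{[\omega]}$ from \cref{hard lefschetz}, compose with the Kodaira--Spencer isomorphism, and conclude via \cref{approx crit lt}. One small bookkeeping remark: the Kodaira--Spencer map is an isomorphism by \emph{semiuniversality} of $\frX \to \Deflt(X)$ (not by smoothness); the smoothness hypothesis is used only because \cref{approx crit lt} requires the base $S$ to be smooth.
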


In light of this result, it becomes important to study step~\textsf{(A)}, that is, to prove a BTT theorem in our setting.
The problem is that while there do exist some singular versions of BTT for threefolds and under other strong hypotheses, it is also well-known that the full result becomes invalid in the presence of singularities: examples by Gross show that for $n \ge 3$ the deformation space of a \cy $n$-fold with canonical singularities need not even be locally irreducible~\cite{Gro97}.
Gross' examples, however, are easily seen not to be \lt.
In particular, in the literature there is no answer to the following question:

\begin{ques}[Locally trivial BTT] \label{BTT question}
Is $\Deflt(X)$ smooth for any normal compact \kahler space with klt singularities and $K_X \sim_\Q 0$?
\end{ques}

For primitive symplectic varieties, the answer to this question is \emph{yes} due to Bakker and Lehn~\cite[Thm.~4.11]{BakkerLehn18}.
We do not know whether the answer is always \emph{yes}, but we do prove that \cref{BTT question} has a positive solution in the following two cases.
The first one is a generalization of the well-known unobstructedness criterion of Kodaira--Nirenberg--Spencer~\cite{KodairaNirenbergSpencer58} to \lt deformations.

\begin{thm}[Locally trivial KNS theorem] \label{main 2}
Let $X$ be a normal compact complex space with $\HH2.X.\T X. = 0$.
Then $\Deflt(X)$ is smooth.
\end{thm}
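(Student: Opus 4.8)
The plan is to run the classical Kodaira--Nirenberg--Spencer argument with the tangent bundle of a manifold replaced by the tangent sheaf $\T X$, working throughout with \emph{locally trivial} deformations. The conceptual point is that local triviality removes all \emph{local} contributions to the deformation theory: each member of a suitable cover deforms as a product, so there are no local obstruction terms, and the entire obstruction theory is governed by the global cohomology of $\T X$ alone. Concretely, I would show that the obstruction to extending a locally trivial deformation of $X$ across a small extension of Artinian bases lies in $\HH2.X.\T X.$. Since this group vanishes by hypothesis, every such extension exists, the formal semiuniversal locally trivial deformation has a formal power series ring as its base, and hence $\Deflt(X)$ is smooth.

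To set up the obstruction calculus, fix a small extension $0 \to (t) \to A' \to A \to 0$ of Artinian local $\C$-algebras with $(t) \isom \C$, together with a locally trivial deformation $\sX_A \to \Spec A$ of $X$. By local triviality there is an open cover $\set{U_i}$ of $X$ and trivializations $\sX_A|_{U_i} \isom U_i \x \Spec A$ over $A$, so that $\sX_A$ is reconstructed from transition automorphisms $g_{ij} \in \Aut_A(U_{ij} \x \Spec A)$ reducing to the identity modulo the maximal ideal. To first order these are encoded by a \v{C}ech $1$-cocycle with values in $\T X$, and the resulting class in $\HH1.X.\T X.$ is the Kodaira--Spencer class; this identifies the tangent space of $\Deflt(X)$ with $\HH1.X.\T X.$.

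Now try to lift each $g_{ij}$ to an automorphism $g_{ij}'$ of $U_{ij} \x \Spec A'$ over $A'$. Choosing the $U_i$ to be Stein, the coherent sheaf $\T X$ has no higher cohomology on them, so lifting a single transition automorphism across the small extension is unobstructed and such local lifts exist. The failure of $\set{g_{ij}'}$ to satisfy the cocycle condition is the identity modulo $(t)$, hence is measured by a \v{C}ech $2$-cochain $\set{h_{ijk}}$ with values in $\T X \tensor_\C (t) \isom \T X$. A direct computation shows that $\set{h_{ijk}}$ is a cocycle whose class $\mathrm{ob}(\sX_A, A') \in \HH2.X.\T X.$ is independent of the chosen local lifts, and that it vanishes precisely when the $g_{ij}'$ can be corrected to a genuine cocycle, i.e.~when $\sX_A$ extends to a locally trivial deformation over $A'$. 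As $\HH2.X.\T X. = 0$, the obstruction vanishes and the extension always exists.

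It remains to pass from this order-by-order unobstructedness to smoothness of the analytic germ. Formally, the vanishing of all obstructions makes the formal deformation functor smooth, so its hull is a power series ring $\C[\![t_1, \dots, t_r]\!]$ with $r = \dim_\C \HH1.X.\T X.$. Since the completed local ring $\wh{\mathscr O}_{\Deflt(X),0}$ of the analytic semiuniversal locally trivial deformation $\frX \to \Deflt(X)$ is this formal hull, and regularity of a Noetherian local ring is inherited from its completion, $\mathscr O_{\Deflt(X),0}$ is regular; thus $\Deflt(X)$ is smooth at the base point. I expect the most delicate steps to be the careful setup of locally trivial deformations over Artinian bases for a possibly singular $X$ and the cocycle-level verification that $\mathrm{ob}(\sX_A, A')$ is well defined and natural; once these foundations are in place, the hypothesis $\HH2.X.\T X. = 0$ does the rest.
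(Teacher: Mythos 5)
Your proposal follows the classical Kodaira--Nirenberg--Spencer \v{C}ech obstruction calculus, which is not the route the paper takes, and it has a genuine gap at exactly the point that the paper flags (in the remark following \cref{main 2}) as the known gap in this approach. The problematic step is your claim that, because the $U_{ij}$ are Stein and the higher cohomology of $\T X$ vanishes there, ``lifting a single transition automorphism across the small extension is unobstructed.'' Vanishing of the higher cohomology of $\T X$ on $U_{ij}$ controls the ambiguity between two lifts of $g_{ij}$ (they differ by an element of $\Hnought U_{ij}.\T X. \tensor (t)$), which is what makes your obstruction cochain well defined; it says nothing about the \emph{existence} of a lift. Extending an automorphism of $U_{ij} \x \Spec A$ over $A$ to one of $U_{ij} \x \Spec A'$ is a lifting problem for morphisms across a nilpotent extension, and the standard way to solve it (as in \cite[Prop.~2.4.6]{Ser06}) is to invoke the Infinitesimal Lifting Property, which is available only when $U_{ij}$ is smooth. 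For singular $X$ this step needs a genuine new argument, and supplying it would amount to proving the stronger statement that $\HH2.X.\T X.$ is an obstruction space for the functor $D_X^{\mathrm{lt}}$ --- precisely what the paper says it does not know how to do in general (it only ``weakly fills this gap'').

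The paper circumvents the issue entirely via the $T^1$-lifting principle of Ran and Kawamata: smoothness of $\Deflt(X)$ follows once the restriction maps $\HH1.\frX.\T{\frX/S}. \to \HH1.\frX'.\T{\frX'/S'}.$ are surjective for the extensions $\C[t]/(t^{n+2}) \to \C[t]/(t^{n+1})$, and this surjectivity is read off from the long exact cohomology sequence of $0 \to \T X \to \T{\frX/S} \to \T{\frX'/S'} \to 0$ together with the hypothesis $\HH2.X.\T X. = 0$. No obstruction class is constructed and no local automorphism is ever lifted. Your first two paragraphs (the tangent space identification) and your last paragraph (passing from formal smoothness to smoothness of the analytic germ via Flenner--Kosarew) are fine; but to salvage the core of your argument you would have to prove the automorphism-lifting statement for singular Stein germs, or else switch to an argument, such as $T^1$-lifting, that avoids it.
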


\begin{rem-plain}
A stronger statement than \cref{main 2} is that $\HH2.X.\T X.$ is an obstruction space for the \lt deformation functor $D_X^{\mathrm{lt}}$ of $X$.
This means that for every sequence $0 \to J \to A' \to A \to 0$ of local Artinian \C-algebras with residue field \C\ and such that $\frm_{A'} J = 0$, there is an exact sequence of pointed sets $D_X^{\mathrm{lt}}(A') \to D_X^{\mathrm{lt}}(A) \to \HH2.X.\T X. \tensor J$.
This is claimed (in the algebraic case) in~\cite[Prop.~2.4.6]{Ser06}, but the proof contains a gap: in order to extend a local automorphism of a deformation of~$X$ over $A$ to $A'$, the Infinitesimal Lifting Property is invoked.
This, however, is only possible if $X$ is smooth.
Our \cref{main 2} weakly fills this gap.
\end{rem-plain}

The second case where we can prove unobstructedness is when the Hodge theory of reflexive differential forms on $X$ behaves nicely.
More precisely, denote by $\Omegar X\bullet = j_* \Omegap U\bullet$ the complex of reflexive differential forms on $X$, where $j \from U \to X$ is the inclusion of the smooth locus.
Then we require two conditions: firstly, the natural map $\underline{\C}_X \to \Omegar X\bullet$ should be a quasi-isomorphism.
Equivalently, the Poincar\'e lemma for reflexive differential forms should hold on $X$: every closed reflexive form of degree $\ge 1$ is locally exact.
Under this assumption there is a Hodge to de Rham spectral sequence, coming from the \emph{filtration b\^ete} on $\Omegar X\bullet$:
\begin{equation} \label{hdr}
E_1^{p,q} = \HH q.X.{\Omegar Xp}. \Longrightarrow \HH{p+q}.X.\C..
\end{equation}
The second requirement is that this spectral sequence degenerate on the first page in degree~$n = \dim X$.
That is, we require the differentials $d_r^{p,q}$ to vanish for all $r \ge 1$ and $p + q = n$.

\begin{thm} \label{main 3}
Let $X$ be an $n$-dimensional normal compact \kahler space with rational singularities and trivial canonical class $K_X \sim 0$ such that the reflexive Poincar\'e lemma holds on $X$ and~\labelcref{hdr} degenerates at $E_1$ in degree~$n$.
Then $\Deflt(X)$ is smooth.
\end{thm}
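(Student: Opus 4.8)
The plan is to prove a singular Bogomolov--Tian--Todorov theorem: I reduce smoothness of $\Deflt(X)$ to the vanishing of all obstructions of the locally trivial deformation functor $D_X^{\mathrm{lt}}$, and then kill these obstructions by a reflexive version of the Tian--Todorov calculation. By \cref{main 2} and the remark accompanying it, $D_X^{\mathrm{lt}}$ is governed by the tangent sheaf $\T X$, with tangent space $\HH1.X.\T X.$ and obstruction space $\HH2.X.\T X.$; concretely it is controlled by a differential graded Lie algebra with cohomology $\HH\bullet.X.\T X.$ and bracket induced by the Lie bracket of vector fields (a Thom--Whitney or \v Cech model of $\mathrm R\Gamma(X, \T X)$). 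Since the semiuniversal locally trivial deformation exists, it therefore suffices to show that the Maurer--Cartan equation $\delbar\phi + \tfrac12[\phi,\phi] = 0$ can be solved to all formal orders.

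The engine is the Calabi--Yau contraction. Fix a nowhere-vanishing section $\sigma \in \Hnought X.\can X.$ trivialising $\can X \isom \O X$. On the smooth locus $U = \Reg X$, contraction with $\sigma$ gives isomorphisms $\wedge^p \T U \bij \Omegap U{n-p}$, and as both sides are reflexive and $\Sing X$ has codimension $\ge 2$ these extend to $\wedge^{[p]}\T X \bij \Omegar X{n-p}$ on all of $X$. In particular $\T X \isom \Omegar X{n-1}$, so that $\HH q.X.\T X. \isom \HH q.X.{\Omegar X{n-1}}. = E_1^{n-1,q}$ is a row of the spectral sequence \cref{hdr}. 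The second ingredient is the reflexive Tian--Todorov identity $\iota_{[\alpha,\beta]}\sigma = \del(\iota_\alpha\iota_\beta\sigma) \pm \iota_\alpha\del(\iota_\beta\sigma) \pm \iota_\beta\del(\iota_\alpha\sigma)$ for reflexive vector fields $\alpha,\beta$; it holds on $U$ by the classical computation and extends across $\Sing X$ by reflexivity.

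With these in place the induction takes its usual form. Assume $\phi_1,\dots,\phi_{m-1}$ solve Maurer--Cartan modulo order $m$ and have been chosen so that each $\iota_{\phi_i}\sigma$ is $\del$-closed. The order-$m$ obstruction is the class of $\Psi_m = \tfrac12\sum_{i+j=m}[\phi_i,\phi_j]$ in $\HH2.X.\T X.$; transported by $\iota_{(-)}\sigma$ it becomes a $\delbar$-closed reflexive form of bidegree $(n-1,2)$ which, by the Tian--Todorov identity together with the inductive $\del$-closedness, is moreover $\del$-exact. Now the reflexive Poincar\'e lemma identifies the hypercohomology of $\Omegar X\bullet$ with $\HH\bullet.X.\C.$, while the degeneration of \cref{hdr} at $E_1$ in degree $n$ --- transported to the neighbouring bidegree by Serre duality, available because $K_X \sim 0$ gives $\Omegar Xn \isom \O X$ --- yields the reflexive $\del\delbar$-lemma in the relevant range: a $\delbar$-closed, $\del$-exact reflexive form is $\delbar$-exact. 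Hence the obstruction class vanishes and $\phi_m$ exists; one further application of the same lemma lets us modify $\phi_m$ by a $\delbar$-closed term so that $\iota_{\phi_m}\sigma$ is again $\del$-closed, which perpetuates the induction. All obstructions vanish, so $D_X^{\mathrm{lt}}$ is unobstructed and $\Deflt(X)$ is smooth.

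I expect the real work to lie entirely in the singular Hodge theory, not in the formal skeleton above. On a singular $X$ there is neither an honest Dolbeault resolution of $\T X$ nor a harmonic theory, so every step that classically rests on the $\del\delbar$-lemma must instead be routed through the two abstract hypotheses. The crux is to show that degeneration \emph{in degree $n$ alone} supplies the $\del\delbar$-lemma in the bidegrees $(n-1,2)$ and $(n-2,2)$ demanded by the obstruction calculus; this is exactly where the self-duality of total degree $n$ under the Serre pairing (using $\can X \isom \O X$) together with the K\"ahler Hodge symmetry must be leveraged to propagate the degeneration to the adjacent total degree. A secondary, more bookkeeping-type issue is to make the controlling DGLA and the reflexive Tian--Todorov identity rigorous as statements on $X$ rather than merely on $U$; here the reflexivity of all sheaves in play and the hypothesis of rational singularities --- ensuring that cohomology is computed correctly and that extension across the codimension-$\ge 2$ locus $\Sing X$ is harmless --- are the decisive inputs.
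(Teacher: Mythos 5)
There is a genuine gap, and it sits at the very first step. You reduce smoothness of $\Deflt(X)$ to the vanishing of obstructions in $\HH2.X.\T X.$, citing ``\cref{main 2} and the remark accompanying it'' for the claim that $D_X^{\mathrm{lt}}$ is governed by $\T X$ with obstruction space $\HH2.X.\T X.$. That remark says the opposite: the obstruction-space statement is claimed in Sernesi's Prop.~2.4.6 but the proof has a gap for singular $X$ (the Infinitesimal Lifting Property is unavailable), and \cref{main 2} only ``weakly fills'' it by proving the \emph{consequence} when $\HH2.X.\T X.=0$ --- an assumption you do not have here. So your reduction to ``kill the obstruction classes in $\HH2.X.\T X.$'' rests on an unestablished foundation. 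Beyond that, the Tian--Todorov engine you invoke needs a Dolbeault model for $\T X$ and a $\del\delbar$-formalism for reflexive forms; on a singular space $\HH q.X.{\Omegar Xp}.$ is sheaf cohomology with no harmonic theory, and your key claim --- that $E_1$-degeneration of~\labelcref{hdr} \emph{in degree $n$ alone}, combined with Serre duality and ``K\"ahler Hodge symmetry,'' yields a reflexive $\del\delbar$-lemma in bidegrees $(n-1,2)$ and $(n-2,2)$ --- is exactly the point you defer, and it is not available: Hodge symmetry and the Poincar\'e lemma for reflexive forms already fail for klt singularities in general (cf.~\cite[Prop.~1.6(b)]{Joerder}), so nothing propagates automatically to adjacent degrees. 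You correctly flag these as ``the real work,'' but they are not bookkeeping; they are the obstacles the hypotheses of the theorem were designed to avoid, not to overcome by this route.

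The paper takes a different and more robust path that sidesteps both problems. It applies the $T^1$-lifting principle, which replaces obstruction classes by the surjectivity of the restriction map $\HH1.\frX.\T{\frX/S}. \to \HH1.\frX'.\T{\frX'/S'}.$ for infinitesimal \lt deformations. Your contraction $\T X \isom \Omegar X{n-1}$ is indeed the shared idea, upgraded to the relative setting via \cref{rel can}, so the issue becomes local freeness and base change for $\RR1.f.\Omegar{\frX/S}{n-1}.$. This is then proved by Deligne's length-counting argument: the relative reflexive Poincar\'e lemma (\cref{rpl}) pins down $\ell\big(\RR n.f.\Omegar{\frX/S}\bullet.\big)$ via the universal coefficient theorem, and degeneration of~\labelcref{hdr} in degree $n$ forces equality in the semicontinuity inequalities $\ell\big(\RR q.f.\Omegar{\frX/S}p.\big) \le \ell(A)\cdot \hh q.X.{\Omegar Xp}.$ for all $p+q=n$, in particular for $(p,q)=(n-1,1)$. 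No DGLA, no $\del\delbar$-lemma, and no obstruction space for $D_X^{\mathrm{lt}}$ are needed. If you want to salvage your approach, you would first have to construct a controlling DGLA for \emph{locally trivial} deformations of a singular space and then establish the reflexive $\del\delbar$-lemma in the required bidegrees from the stated hypotheses --- both of which are open problems rather than routine adaptations.
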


The assumptions in \cref{main 3} are very strong: even for klt singularities, the reflexive Poincar\'e lemma can fail in degree at least two and if it holds, the spectral sequence~\labelcref{hdr} need not degenerate on the first page \cite[Prop.~1.6(b)]{Joerder}.
However, the assumptions do hold in at least two interesting cases:
\begin{itemize}
\item if $X$ has only (finite) quotient singularities~\cite{Steenbrink77}, and
\item if $X$ is a \emph{toroidal} complex space, i.e.~it is locally analytically isomorphic to the germ of a toric variety at a torus-fixed point~\cite{Danilov91}.
\end{itemize}
We deduce the following corollary.

\begin{cor}[Locally trivial BTT for orbifold/toroidal singularities] \label{cor 4}
Let $X$ be a normal compact \kahler space with either only quotient or only toroidal singularities and $K_X \sim_\Q 0$.
Then $\Deflt(X)$ is smooth.
\end{cor}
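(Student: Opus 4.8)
The plan is to deduce the result directly from \cref{main 3}, whose three hypotheses---rational singularities, the reflexive Poincar\'e lemma, and $E_1$-degeneration of \labelcref{hdr} in degree $n$---I would verify for the two classes of singularities in parallel, the quotient and toroidal cases being governed by the same two external inputs. The one point not immediately covered is that \cref{main 3} assumes $K_X \sim 0$, whereas here only $K_X \sim_\Q 0$; I would bridge this gap at the end by passing to the index-one cover.

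First I would dispense with the Hodge-theoretic hypotheses. Both quotient and toroidal singularities are rational (classical for quotients; for toroidal spaces this follows from the toric/Cohen--Macaulay structure). The reflexive Poincar\'e lemma and, more than we need, the full $E_1$-degeneration of \labelcref{hdr} are then supplied off the shelf: for $X$ with quotient singularities by Steenbrink's theory of $V$-manifolds \cite{Steenbrink77}, and for toroidal $X$ by Danilov \cite{Danilov91}. In particular \labelcref{hdr} degenerates in every degree, hence \emph{a fortiori} in degree $n$. Thus every hypothesis of \cref{main 3} holds for a compact \kahler space of either type as soon as its canonical class is actually trivial.

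To reduce the torsion case $K_X \sim_\Q 0$ to the trivial case, I would pass to the index-one cover $\pi \from Y \to X$, a cyclic quasi-\'etale Galois cover with group $G \isom \Z/m\Z$ satisfying $\can Y \isom \O Y$. Here I must check that $Y$ stays within reach of \cref{main 3}: that $Y$ is compact \kahler (finite covers of compact \kahler spaces are \kahler), and that $Y$ inherits the singularity type, since the local index-one cover of a quotient singularity is again a quotient singularity and that of a toric germ is again toric. Granting this, the previous paragraph shows that all hypotheses of \cref{main 3} are met for the normal space $Y$, so that $\Deflt(Y)$ is smooth.

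It remains to descend smoothness along $\pi$, and this is where I expect the real work to lie. The mechanism is that, because $\pi$ is quasi-\'etale, locally trivial deformations of $X$ are the same as $G$-equivariant locally trivial deformations of $Y$; on tangent spaces this is the identification $\HH1.X.\T X. \isom \HH1.Y.\T Y.^G$. Under this equivalence $\Deflt(X)$ becomes the fixed locus $\Deflt(Y)^G$ of the induced action of the finite group $G$ on the smooth germ $\Deflt(Y)$. Since $G$ is finite its action can be linearized near the base point by Cartan's lemma, and the fixed locus of a linear finite-group action on a smooth germ is smooth; hence $\Deflt(X)$ is smooth. Making the equivalence between deformations of $X$ and $G$-equivariant deformations of $Y$ rigorous in this singular, analytic, locally trivial setting---in particular choosing the semiuniversal family over $\Deflt(Y)$ to be $G$-equivariant---is the delicate part of the argument; by contrast the Hodge-theoretic verifications reduce to citations.
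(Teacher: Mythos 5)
Your proposal follows the paper's proof essentially verbatim: reduce to the index-one cover $X_1$ (checking that the singularity type is preserved, which the paper does in \cref{index 1}), apply \cref{main 3} to $X_1$ using the Steenbrink/Danilov inputs for the reflexive Poincar\'e lemma and $E_1$-degeneration, and descend smoothness via the identification of $\Deflt(X)$ with the $G$-fixed locus of $\Deflt(X_1)$. The descent step you flag as delicate is exactly the content of the paper's \cref{deflt quotient}, which is imported from~\cite[Prop.~6.1, Prop.~6.2]{AlgApprox} and realized by the same Cartan-linearization/fixed-locus mechanism you describe.
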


\noindent
Summing up Theorems~\labelcref{main unobstructed},~\labelcref{main 2} and \cref{cor 4}, we have the following positive partial answer to Peternell's conjecture:

\begin{cor} \label{cor 5}
Let $X$ be a normal compact \kahler space with $K_X \sim_\Q 0$.
Assume any of the following:
\begin{enumerate}
\item\label{cor 5.1} $X$ has only quotient singularities.
\item\label{cor 5.2} $X$ has only toroidal singularities.
\item\label{cor 5.3} $X$ is klt and the cohomology group $\HH2.X.\T X.$ vanishes.
\end{enumerate}
Then $X$ admits a strong algebraic approximation.
\end{cor}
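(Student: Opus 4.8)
The plan is to reduce all three cases to \cref{main unobstructed}. That theorem requires $X$ to be a normal compact \kahler space that is klt with $K_X \sim_\Q 0$ and whose locally trivial deformation space $\Deflt(X)$ is smooth, and it then delivers exactly the desired strong algebraic approximation. Since $X$ is assumed normal, compact, \kahler and to satisfy $K_X \sim_\Q 0$ throughout \cref{cor 5}, it remains only to verify the klt condition and the smoothness of $\Deflt(X)$ in each of the three situations, after which the conclusion is immediate.

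First I would dispose of the klt hypothesis. In case~\labelcref{cor 5.3} it is part of the assumptions. In case~\labelcref{cor 5.1}, finite quotient singularities are $\Q$-factorial and log terminal, hence klt. In case~\labelcref{cor 5.2}, a normal toric variety is $\Q$-Gorenstein precisely when $K_X$ is $\Q$-Cartier, which the hypothesis $K_X \sim_\Q 0$ guarantees; a short discrepancy computation on a toric resolution (the interior rays of each maximal cone contribute strictly positive coefficients, forcing all discrepancies to exceed $-1$) shows that every $\Q$-Gorenstein toric variety is klt, and this passes to the \'etale-local toroidal setting. Thus $X$ is klt in all three cases.

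Next I would establish the smoothness of $\Deflt(X)$. For cases~\labelcref{cor 5.1} and~\labelcref{cor 5.2} this is precisely the content of \cref{cor 4}. For case~\labelcref{cor 5.3}, the hypothesis $\HH2.X.\T X. = 0$ together with \cref{main 2} gives the smoothness directly. With both the klt property and the smoothness of $\Deflt(X)$ now verified, \cref{main unobstructed} applies in each case and produces a strong algebraic approximation of $X$.

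I expect no deep difficulty: once the statements are lined up, the corollary is a formal consequence of the three quoted results. The one step meriting care — and the only place the argument is not a pure citation — is matching hypotheses, namely confirming that quotient and toroidal singularities with $K_X \sim_\Q 0$ are klt, so that \cref{main unobstructed} may legitimately be invoked. This is standard, so the remainder is essentially bookkeeping.
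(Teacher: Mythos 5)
Your proposal is correct and follows the same route as the paper: cases~\labelcref{cor 5.1} and~\labelcref{cor 5.2} go through \cref{cor 4}, case~\labelcref{cor 5.3} through \cref{main 2}, and all three then feed into \cref{main unobstructed}. The klt verification for quotient and toroidal singularities that you rightly flag as the only non-citation step is also present in the paper, where it is absorbed into the proof of \cref{cor 4} via the standard references rather than argued by hand.
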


\begin{rem-plain}
In the orbifold case~\labelcref{cor 5.1}, the analog of the Beauville--Bogomolov decomposition for~$X$ is already known~\cite[Thm.~6.4]{Cam04FanoConference}.
It is tempting to try and use this splitting result in order to obtain an algebraic approximation.
Our approach is completely different, as it does not rely on any kind of structure theorem.
\end{rem-plain}

\subsection{Recent progress}

Since this paper was first posted on the arXiv, substantial progress has been made on the questions discussed herein.
Building on our Theorems~\labelcref{approx crit lt} and~\labelcref{hard lefschetz} and on further work of the first-named author~\cite{BochnerCGGN}, \cref{peternell} and the Beauville--Bogomolov decomposition have been established for klt \kahler spaces with $\mathrm c_1 = 0$~\cite{BakkerGuenanciaLehn20, KodairaFour}.
This was done by proving a weak version~of the BTT unobstructedness theorem which only deals with deformations in the ``symplectic direction''.
In general, the answer to \cref{BTT question} remains unknown and our \cref{cor 4} remains one of the few partial results available.

\subsection*{Acknowledgements}

The first-named author would like to thank Professor E.~Sernesi for helpful discussions about deformation theory, in particular about~\cref{main 2}.
Both authors thank the anonymous referee for careful proofreading and several suggestions improving our exposition.

\section{Basic facts and definitions}

\subsection*{Complex spaces}

All complex spaces are assumed to be separated and connected, unless otherwise stated.
An open subspace of a complex space is called \emph{big} if its complement is analytic and has codimension at least two.

\subsection*{Deformation theory} \label{sec def theory}

We collect some basic definitions and facts from deformation theory.

\begin{dfn}[Deformations of complex spaces]
A \emph{deformation} of a reduced compact complex space $X$ is a proper flat morphism $\pi \from \frX \to (S, 0)$ from a (not necessarily reduced) complex space $\frX$ to a complex space germ $(S, 0)$, equipped with a fixed isomorphism $X_0 \isom X$, where we write $X_s \defn \pi\inv(s)$ for the fibre over any $s \in S$.
We usually suppress both the base point $0 \in S$ and the choice of isomorphism from notation.

If $S = \Spec A$ for some local Artinian \C-algebra $A$ with residue field \C, the deformation $\frX \to S$ is called \emph{infinitesimal}.
\end{dfn}

\begin{dfn}[Algebraic approximations] \label{def alg approx}
Let $X$ be a compact complex space and $\pi \from \frX \to S$ a deformation of $X$.
Consider the set of projective fibres
\[ S^{\mathrm{alg}} \defn \big\{ s \in S \;\big|\; \frX_s \text{ is projective} \big\} \subset S \]
and its closure $\overline{S^{\mathrm{alg}}} \subset S$.
We say that $\frX \to S$ is an \emph{algebraic approximation of $X$} if $0 \in \overline{S^{\mathrm{alg}}}$.
We say that $\frX \to S$ is a \emph{strong algebraic approximation of $X$} if $\overline{S^{\mathrm{alg}}} = S$ as germs, i.e.~$S^{\mathrm{alg}}$ is dense near $0 \in S$.
\end{dfn}

\begin{dfn}[Locally trivial deformations] \label{dfn lt def}
A deformation $\pi \from \frX \to S$ is called \emph{\lt} if for every $x \in \frX_0$ there exist open subsets $0 \in S^\circ \subset S$ and $x \in U \subset \pi\inv(S^\circ)$ and an isomorphism
\[ \xymatrix{
U \ar^-\sim[rr] \ar_-\pi[dr] & & (\frX_0 \cap U) \x S^\circ \ar^-{\operatorname{pr}_2}[dl] \\
& S^\circ. &
} \]
\end{dfn}

\noindent
Any compact complex space $X$ admits a semiuniversal \lt deformation $\frX \to \Deflt(X)$, and $\Deflt(X)$ is a closed subspace of the semiuniversal deformation space $\Def(X)$.
This is proved in~\cite[Cor.~0.3]{FK87}.

\subsection*{Resolution of singularities}

A \emph{resolution of singularities} of a complex space $X$ is a proper bimeromorphic morphism $f \from \wt X \to X$, where $\wt X$ is a complex manifold.
We say that the resolution is \emph{projective} if $f$ is a projective morphism.
In this case, if $X$ is projective (resp.~compact \kahler) then so is $\wt X$.
A resolution is said to be \emph{strong} if it is a biholomorphism over the smooth locus of $X$.

\begin{thm}[Functorial resolutions] \label{funct res}
There exists a \emph{resolution functor} which assigns to any complex space $X$ a strong projective resolution $\pi_X \from \cR(X) \to X$, such that $\cR$ commutes with smooth maps in the following sense:
For any smooth morphism $f \from W \to X$, there is a unique smooth morphism $\cR(f) \from \cR(W) \to \cR(X)$ such that the following diagram is a fibre product square.
\[ \xymatrix{
\cR(W) \ar^-{\cR(f)}[rr] \ar_-{\pi_W}[d] & & \cR(X) \ar^-{\pi_X}[d] \\
W \ar^-f[rr] & & X.
} \]
\end{thm}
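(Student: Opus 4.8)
The plan is to invoke the algorithmic resolution of singularities in characteristic zero and to extract the smooth-functoriality from the canonical nature of the construction. In the algebraic category, the resolutions of Bierstone--Milman, Villamayor and W\l{}odarczyk proceed by blowing up a canonically defined sequence of smooth centers, each being the maximal locus of a local desingularization invariant. Because a composition of blowups along smooth centers is a projective morphism, and each such blowup is an isomorphism away from its center, the resulting map $\pi_X \from \cR(X) \to X$ is automatically a \emph{strong projective} resolution. The decisive feature I would rely on is that the desingularization invariant is insensitive to smooth directions: if $g \from Y' \to Y$ is smooth, then the invariant at a point $y' \in Y'$ equals the invariant at its image $g(y')$. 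Consequently the canonical centers on $Y'$ are precisely the preimages of those on $Y$, and the whole blowup sequence on $Y'$ is the base change of the one on $Y$.

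I would first transfer this to the complex-analytic setting. Since the algorithm is local and canonical, one may either run the analytic version of the Bierstone--Milman procedure directly, or glue the analytifications of local algebraic resolutions. For the gluing, note that open immersions are smooth, so smooth-functoriality guarantees that the local resolutions agree on overlaps; the canonical invariant, being read off from the local rings, is manifestly compatible with restriction to open subsets. This yields the global functor $\cR$ on complex spaces together with the morphisms $\pi_X$.

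It then remains to upgrade smooth-functoriality into the fibre-product statement. Given a smooth morphism $f \from W \to X$, the preceding discussion produces a canonical identification $\cR(W) \isom W \x_X \cR(X)$ under which $\pi_W$ becomes the first projection; the second projection then furnishes the desired $\cR(f) \from \cR(W) \to \cR(X)$, and its uniqueness follows from the universal property of the fibre product. That $\cR(W)$ is a complex manifold is automatic from this description: as $\cR(X)$ is smooth and $f$ is smooth, the base-changed projection $W \x_X \cR(X) \to \cR(X)$ is smooth, whence its source is smooth as well; likewise projectivity and bimeromorphy of $\pi_W$ descend from $\pi_X$ by base change.

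The main obstacle is the verification that the desingularization invariant genuinely does not see the fibre directions of a smooth morphism, since this is the one point that cannot be reduced to purely formal considerations. In the analytic category this requires checking that the invariant, a priori defined via local embeddings into polydiscs and maximal-contact hypersurfaces, is independent of these auxiliary choices and compatible with $f$; equivalently, that the local models used to compute it on $W$ and on $X$ can be chosen compatibly along $f$. Once this invariance is established, the cartesian square is a formal consequence of the base-change compatibility of the individual blowups.
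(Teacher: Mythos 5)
The paper offers no argument for this statement at all: its ``proof'' is the single line ``See~\cite[Thm.~3.45]{Kol07}'', so the theorem is imported as a black box. Your proposal is, in effect, a sketch of the proof of that cited theorem, and as such it identifies the correct strategy: canonical resolution via a tower of blowups in smooth centres determined by a local invariant, with functoriality coming from the fact that the invariant, and hence the whole blowup sequence, commutes with smooth morphisms. Two caveats are worth recording. First, your fallback of ``gluing the analytifications of local algebraic resolutions'' does not work for a general complex space, since analytic germs need not be algebraizable; one genuinely has to run the algorithm in the analytic category (as Kollár and Bierstone--Milman do), which is why the compatibility of the invariant with local embeddings and with smooth maps must be checked analytically. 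Second, strongness of the resolution is not ``automatic'' from the centres being smooth: one needs the additional property of the algorithm that every centre lies over the singular locus, i.e.\ that the invariant is trivial at smooth points, so that no blowup touches $\Reg X$. Both of these, together with the smooth-invariance of the invariant that you explicitly leave open, constitute precisely the content of \cite[Thm.~3.45]{Kol07}; so your text is a faithful outline of the cited proof rather than a self-contained argument, which is an acceptable level of detail for a statement the paper itself only quotes.
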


\begin{proof}
See~\cite[Thm.~3.45]{Kol07}.
\end{proof}

\begin{dfn}[Simultaneous resolutions]
Let $\frX \to S$ be a deformation of $X$.
A \emph{simultaneous resolution} of $\frX \to S$ is a proper bimeromorphic map $\frY \to \frX$ such that the composition $\frY \to S$ is a smooth morphism.
\end{dfn}

Unlike the general case, simultaneous resolutions do exist for \lt deformations.
This is proved in~\cite[Lemma~4.6]{BakkerLehn18}, but see also~\cite[Lemma~4.2]{MoriKodaira}.
If the deformation in question is locally given as a product $U \x S$ with $U \subset X$ open, then on the preimage sets, $\frY$ will be given by $\cR(U) \x S$.
In particular, the restriction of $\frY \to \frX$ to the central fibre is the functorial resolution $Y = \cR(X) \to X$, and $\frY \to S$ is a (\lt) deformation of $Y$.

\subsection*{Reflexive sheaves on non-reduced spaces}

For the rest of this section, we fix the following setting: $X$ is a normal compact complex space and $\pi \from \frX \to S = \Spec A$ is an infinitesimal deformation of $X$.

\begin{lem} \label{531}
The sheaf $\O\frX$ is torsion-free and normal, i.e.~for any open subset $U \subset \frX$ and any analytic subset $Z \subset U$, the following holds:
\begin{enumerate}
\item\label{531.1} The restriction map $\Hnought U.\O\frX. \to \Hnought U \setminus Z.\O\frX.$ is injective.
\item\label{531.2} If $\codim \frX Z \ge 2$, then the above restriction map is bijective.
\end{enumerate}
\end{lem}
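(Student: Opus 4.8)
The plan is to reduce both assertions to the classical facts that $\O X$ is torsion-free and normal, exploiting the flatness of $\pi$ together with the finiteness of $A$. Since $A$ is Artinian and local with residue field $\C$, it has finite length, so we may choose a chain of ideals $0 = I_0 \subsetneq I_1 \subsetneq \cdots \subsetneq I_k = A$ with $I_j/I_{j-1} \isom A/\frm = \C$ for all $j$. Because $\pi$ is flat, $\O\frX$ is a sheaf of flat $A$-modules; hence tensoring the inclusions $I_{j-1}\subset I_j$ with $\O\frX$ stays injective and produces a filtration by coherent subsheaves
\[ 0 = I_0\O\frX \subset I_1\O\frX \subset \cdots \subset I_k\O\frX = \O\frX \]
whose graded pieces are
\[ I_j\O\frX\big/I_{j-1}\O\frX \isom (I_j/I_{j-1})\tensor_A\O\frX \isom \O\frX\big/\frm\O\frX \isom \O X, \]
the last isomorphism holding because $X$ is the central fibre of $\pi$. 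Thus $\O\frX$ is a finite iterated extension of copies of $\O X$.

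Both assertions are conveniently expressed through local cohomology. For a coherent sheaf $\cF$ on $\frX$ (whose underlying topological space is that of $X$), an open $U$, and an analytic subset $Z \subset U$, let $\sH^i_Z(\cF)$ denote the local cohomology sheaves with supports in $Z$ and let $j\from U\setminus Z \inj U$. The four-term exact sequence
\[ 0 \to \sH^0_Z(\cF) \to \cF \to j_*\big(\cF|_{U\setminus Z}\big) \to \sH^1_Z(\cF) \to 0 \]
shows that $\sH^0_Z(\cF)=0$ implies the injectivity in assertion~\labelcref{531.1}, while $\sH^0_Z(\cF)=\sH^1_Z(\cF)=0$ implies the bijectivity in assertion~\labelcref{531.2}. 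For the base object $\O X$ these vanishings are classical: as $X$ is normal, $\O X$ is reduced and satisfies Serre's condition $S_2$, so $\sH^0_Z(\O X)=0$ for every nowhere dense $Z$ and $\sH^0_Z(\O X)=\sH^1_Z(\O X)=0$ whenever $\codim \frX Z \ge 2$. Hence $\O X$ enjoys both~\labelcref{531.1} and~\labelcref{531.2}.

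With these reformulations the lemma follows by induction along the filtration. Writing $\cG' = I_{j-1}\O\frX$, $\cG = I_j\O\frX$, and $\cG''\isom\O X$, the short exact sequence $0\to\cG'\to\cG\to\cG''\to0$ induces the long exact sequence
\[ \cdots \to \sH^i_Z(\cG') \to \sH^i_Z(\cG) \to \sH^i_Z(\cG'') \to \sH^{i+1}_Z(\cG') \to \cdots. \]
Assuming inductively that $\cG'$ satisfies the relevant vanishings and knowing them for $\cG''\isom\O X$, the sequence forces $\sH^0_Z(\cG)=0$ for every nowhere dense $Z$, and, when $\codim \frX Z \ge 2$, also $\sH^1_Z(\cG)=0$. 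Starting from $I_1\O\frX\isom\O X$ and climbing to $I_k\O\frX=\O\frX$, we conclude $\sH^0_Z(\O\frX)=0$ for all nowhere dense $Z$ and $\sH^0_Z(\O\frX)=\sH^1_Z(\O\frX)=0$ for all $Z$ of codimension at least two, which is precisely~\labelcref{531.1} and~\labelcref{531.2}.

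The only step carrying real content beyond bookkeeping is the inductive passage for~\labelcref{531.2}: the possible failure of surjectivity of the restriction map is measured by $\sH^1_Z$, and it is exactly the long exact sequence of local cohomology that lets the codimension-$\ge 2$ vanishing survive taking extensions. I would also be careful to justify the two inputs feeding the reduction—the identification $\O\frX/\frm\O\frX\isom\O X$ of the central fibre and the flat base-change computation of the graded pieces $I_j\O\frX/I_{j-1}\O\frX$—both of which rest on the flatness of $\pi$ and the definition of a deformation. The remaining points (coherence of the subsheaves $I_j\O\frX$, availability of local cohomology for coherent analytic sheaves, and the elementary identity-theorem argument behind $\sH^0_Z(\O X)=0$) are routine.
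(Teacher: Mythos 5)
Your proof is correct and follows essentially the same route as the paper's: both exploit flatness over the Artinian base $A$ to filter $\O\frX$ with graded pieces isomorphic to $\O X$ and then induct along the filtration, reducing to torsion-freeness and normality of $\O X$. The only differences are cosmetic --- you use a composition series of $A$ where the paper uses the $\frm$-adic filtration (so your graded pieces are single copies of $\O X$ rather than $\O X^{\oplus N}$), and you package the diagram chase via the local-cohomology exact sequence instead of carrying it out by hand after reducing to the Stein case.
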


\begin{proof}
Let $\frm \subset A$ be the maximal ideal, and set $S_n = \Spec A_n$, where $A_n \defn \factor A{\frm^{n+1}}$.
We will prove by induction on $n$ that the claims hold for the base change $\frX_n \to S_n$, for every $n \in \N$.
This is sufficient because $A$ is Artinian and so $A_n = A$ for $n \gg 0$.

For $n = 0$, the statement is true because $\frX_0 = X$ is assumed to be normal.
For the inductive step, assume the statement for $\frX_{n-1}$ and consider the short exact sequence
\begin{equation} \label{ses artin}
0 \lto \factor{\frm^n}{\frm^{n+1}} \lto \factor A{\frm^{n+1}} \lto \factor A{\frm^n} \lto 0.
\end{equation}
Note that $\factor{\frm^n}{\frm^{n+1}}$ is annihilated by $\frm$, hence it is in fact an $\factor A\frm$-module (that is, a \C-vector space, say of dimension $N$) and its $A$-module structure is obtained by pullback along the natural map $A \to \C$.
Thus, pulling back sequence~\labelcref{ses artin} to $\frX$, by flatness we obtain an exact sequence
\[ 0 \lto \O X^{\oplus N} \lto \O{\frX_n} \lto \O{\frX_{n-1}} \lto 0. \]
Torsion-freeness~\labelcref{531.1} follows easily from this by a diagram chase.
For normality~\labelcref{531.2}, note that extendability over $Z$ is a local property because of torsion-freeness.
We may thus assume that $\frX$ is Stein.
In this case, the claim again follows from a diagram chase.
\end{proof}

It follows from \cref{531} that $\sF\dual \defn \sHom\sF.\O\frX.$ is torsion-free and normal for any coherent sheaf $\sF$ on $\frX$.
In particular, if $\sF$ is reflexive in the sense that the natural map $\sF \to \sF\ddual$ to the double dual is an isomorphism, then $\sF$ is torsion-free and normal.
Conversely, if $\sF$ is torsion-free and normal and locally free on a big open subset $\frU \subset \frX$, then it is reflexive.

Now let $\sF$ be coherent and locally free on a big open subset $\frU \subset \frX$ (but not necessarily torsion-free and normal).
Then the double dual of the natural map $\sF \to i_* i^* \sF$ is an isomorphism, where $i \from \frU \inj \frX$ denotes the inclusion.
In other words, $\sF\ddual \isom i_* i^* \sF$ for such a sheaf.

We apply the above observations to various sheaves of \kahler differentials.
Let $\Omegap{\frX/S}1$ be the sheaf of relative differentials of the morphism $\pi$, and set $\Omegap{\frX/S}p \defn \bigwedge^p \Omegap{\frX/S}1$ for any integer $p \ge 0$.
The smooth locus $i \from \frU \inj \frX$ of $\pi$ is a big open subset, and $i^* \Omegap{\frX/S}1 = \Omegap{\frU/S}1$ is locally free.
We define the sheaf of \emph{relative reflexive $p$-forms} to be $\Omegar{\frX/S}p \defn \big( \Omegap{\frX/S}p \big) \ddual = i_* \big( \Omegap{\frU/S}p \big)$.
For $p = \dim X$, we obtain the \emph{relative canonical sheaf} $\can{\frX/S}$.
In a similar vein, we have the \emph{relative tangent sheaf} $\T{\frX/S} = i_* \big( \T{\frU/S} \big)$.
If the deformation $\frX \to S$ is \lt, then after shrinking $\frX$ we may write $\frX = X \x S$.
In this case, $\Omegar{\frX/S}p = \mathrm{pr}_1^* \, \Omegar Xp$, where $\mathrm{pr}_1$ is the projection onto the first factor.
In particular the sheaf $\Omegar{\frX/S}p$ is flat over $S$, and its restriction to the central fibre is $\Omegar Xp$.

Continuing to assume local triviality, let $g \from \frY \to \frX$ be a simultaneous resolution of $\frX \to S$.
Then there is a natural push-forward map $g_* \Omegap{\frY/S}p \inj \Omegar{\frX/S}p$.

\begin{prp} \label{gkkp}
If the central fibre $X$ has rational singularities, then the above map $g_* \Omegap{\frY/S}p \inj \Omegar{\frX/S}p$ is an isomorphism for any $p \ge 0$.
\end{prp}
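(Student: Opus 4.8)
The plan is to use local triviality to collapse the relative statement over the Artinian base $S = \Spec A$ to the corresponding statement on the central fibre, where it becomes the extension theorem for reflexive differential forms on a space with rational singularities. Being an isomorphism is local on $\frX$, and the push-forward map $g_* \Omegap{\frY/S}p \inj \Omegar{\frX/S}p$ is natural with respect to restriction to open subsets; hence I may work locally and assume, by local triviality and the description of simultaneous resolutions recalled before the statement, that $\frX = U \x S$ for an open $U \subset X$, that $\frY = \cR(U) \x S$, and that $g \from \frY \to \frX$ is $\pi_U \x \id_S$, where $\pi_U \from \cR(U) \to U$ is the strong functorial resolution of the central fibre.

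In this product situation everything is explicit. Since $\cR(U)$ is smooth, the relative differentials are the pullback $\Omegap{\frY/S}p = \mathrm{pr}_1^* \, \Omegap{\cR(U)}p$, which, as a sheaf on the topological space $\cR(U)$ carrying its $A$-action, is simply $\Omegap{\cR(U)}p \tensor_\C A$. The decisive point is that $A$ is \emph{finite-dimensional} over $\C$, so that $\Omegap{\cR(U)}p \tensor_\C A$ is a finite direct sum of copies of $\Omegap{\cR(U)}p$ as a sheaf of abelian groups. As push-forward commutes with finite direct sums and $g$ agrees with $\pi_U$ as a continuous map, I get
\[
g_* \Omegap{\frY/S}p \isom \big( \pi_{U*} \Omegap{\cR(U)}p \big) \tensor_\C A .
\]
On the other side, the identity $\Omegar{\frX/S}p = \mathrm{pr}_1^* \, \Omegar Up$ recorded before the statement yields $\Omegar{\frX/S}p \isom \Omegar Up \tensor_\C A$, and by naturality the push-forward map is identified under these isomorphisms with the central-fibre map $\pi_{U*} \Omegap{\cR(U)}p \to \Omegar Up$ tensored over $\C$ with $A$.

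It therefore suffices to prove that the central-fibre map $\pi_{U*} \Omegap{\cR(U)}p \to \Omegar Up$ is an isomorphism, that is, that every holomorphic $p$-form on $\Reg U$ pulls back to a holomorphic form on $\cR(U)$. This is exactly the extension theorem for reflexive differential forms on complex spaces with rational singularities established by Kebekus and Schnell, and it is here, and only here, that the hypothesis that $X$ (hence $U$) has rational singularities is used; tensoring the resulting isomorphism over $\C$ with the free $\C$-module $A$ preserves it, so $\phi$ is a local, hence global, isomorphism. The genuinely deep input is thus the central-fibre extension theorem, which I cite; the only work internal to the proof is the base-change identity $g_*(-\tensor_\C A) \isom (g_*-)\tensor_\C A$, where the infinitesimal, Artinian nature of the deformation is essential. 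I expect the main temptation to avoid is a naive dévissage along the powers of $\frm$: pushing forward the flat exact sequences as in \cref{531} introduces terms $R^1 g_* \Omegap{\cR(U)}p$, which need not vanish for intermediate $p$ even under rational singularities, so right-exactness of $g_*$ fails and the five lemma does not close. Local triviality is precisely what sidesteps this by trivializing $g$ fibrewise; without it the proposition would be the full relative GKKP theorem over a non-reduced base, which is substantially harder.
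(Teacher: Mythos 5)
Your proof is correct, and it rests on the same two pillars as the paper's: the local product structure $\frX = U \x S$, $\frY = \cR(U) \x S$ coming from local triviality of the deformation and the functoriality of the simultaneous resolution, and the Kebekus--Schnell extension theorem on the central fibre, which is where rational singularities enter. The execution differs: the paper runs an induction on the powers of the maximal ideal, tensoring the pushed-forward map with the sequence~\labelcref{ses artin} and closing each step with the Five Lemma (exactness of the rows being justified, as in your argument, by local triviality), whereas you observe directly that over the Artinian base everything is literally $(-) \tensor_\C A$ with $A$ a finite-dimensional $\C$-vector space, so that $g_* \Omegap{\frY/S}p \to \Omegar{\frX/S}p$ is the central-fibre map tensored with $A$ and the whole d\'evissage collapses. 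Your version is shorter and arguably more transparent; the paper's inductive formulation has the mild advantage of being the template reused verbatim elsewhere (e.g.\ in the proof of \cref{rel can}, where the map being deformed is not itself of product type and one genuinely needs the Five Lemma step rather than a global tensor identity). One small correction to your closing remark: the ``naive d\'evissage'' you warn against --- pushing forward exact sequences on $\frY$ and running into $R^1 g_*$ --- is indeed problematic, but it is not what the paper does; the paper tensors the \emph{already pushed-forward} sheaves with the Artinian filtration, and local triviality supplies the exactness, so its induction is sound and is really just your tensor identity unrolled one graded piece at a time.
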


\begin{proof}
We employ induction on $n$, using the same notation and line of reasoning as in the proof of \cref{531}.
Let $\bar g \from Y \to X$ be the restriction of $g$ to the central fibre.
For the start of induction, we need to know that $\bar g_* \Omegap Yp = \Omegar Xp$, which follows from~\cite[Cor.~1.8]{KebekusSchnell18} as $X$ has rational singularities.
For the inductive step, tensor sequence~\labelcref{ses artin} with the map in question.
By local triviality, we obtain a commutative diagram
\[ %\scalebox{0.95}{
\xymatrix{
0 \ar[r] & g_* \Omegap{\frY/S}p \tensor \O X^{\oplus N} \ar[r] \ar^-{\beta_n}[d] & g_* \Omegap{\frY/S}p \tensor \O{\frX_n} \ar[r] \ar[d]^-{\alpha_n} & g_* \Omegap{\frY/S}p \tensor \O{\frX_{n-1}} \ar[r] \ar^-{\alpha_{n-1}}[d] & 0 \\
0 \ar[r] & \Omegar{\frX/S}p \tensor \O X^{\oplus N} \ar[r] & \Omegar{\frX/S}p \tensor \O{\frX_n} \ar[r] & \Omegar{\frX/S}p \tensor \O{\frX_{n-1}} \ar[r] & 0.
} \]
The map $\beta_n = \alpha_0^{\oplus N}$ is just a finite direct sum of copies of $\alpha_0$.
In particular, it is an isomorphism.
Likewise, $\alpha_{n-1}$ is an isomorphism by the inductive assumption.
It now follows from the Five Lemma that also $\alpha_n$ is an isomorphism.
\end{proof}

\subsection*{\kahler metrics on singular spaces}

In~\cite{TorusQuotients}, the first author associated to a \kahler metric on a singular complex space $X$ a cohomology class in the topological cohomology group $\HH2.X.\R.$.
Here, our focus will instead be on the (coherent) sheaf of \kahler differentials.
The content of this section is probably well-known to experts.
Unfortunately, we have not been able to locate it in the published literature.
The interested reader may check that all of the following is compatible with the usual notions if $X$ is smooth.

\begin{ntn}[Pluriharmonic functions, \protect{\cite[pp.~17,~23]{Var89}}] \label{PH X}
Let $X$ be a reduced complex space.
We denote by $\Cinfty X$ the sheaf of smooth real-valued functions on $X$.
Moreover, we denote by $\PH X$ the image of the real part map $\re \from \O X \to \Cinfty X$, which is called the sheaf of real-valued \emph{pluriharmonic functions} on $X$, and we set $\KK X \defn \factor{\Cinfty X}{\PH X}$.
\end{ntn}

The short exact sequence defining the sheaf $\KK X$ gives rise to a connecting homomorphism
\[ \delta \from \KK X(X) \lto \HH1.X.\PH X.. \]
Furthermore, since a holomorphic function with vanishing real part is locally constant, we have an isomorphism
\[ \PH X \isom \factor{\O X}{\underline\R_X} \]
where $\underline\R_X \inj \O X$ is embedded via multiplication by $\mathrm i$.
Consequently we get an exterior differentiation map $\d \from \PH X \to \Omegap X1$, where $\Omegap X1$ is the sheaf of (universally finite) \kahler differentials~\cite[Satz~1.2]{GK64}.
This induces a map on cohomology
\[ \d \from \HH1.X.\PH X. \lto \HH1.X.\Omegap X1.. \]
Composing, for any element $\kappa \in \KK X(X)$ we get a class
\[ [\kappa] \defn (\d \circ \delta)(\kappa) \in \HH1.X.\Omegap X1.. \]

\begin{dfn}[\kahler metrics, \protect{\cite[pp.~23,~18]{Var89}}] \label{kahler def}
Let $X$ be a reduced complex space.
\begin{enumerate}
\item A \emph{\kahler metric} on $X$ is an element $\omega$ of $\KK X(X)$ which can be represented by a family $(U_i,\phi_i)_{i \in I}$ such that $\phi_i$ is a smooth strictly plurisubharmonic function on $U_i$ for all $i \in I$.
That is, locally $\phi_i$ is the restriction of a smooth strictly plurisubharmonic function on an open subset of $\C^{N_i}$ under a local embedding $U_i \inj \C^{N_i}$.
\item We say that $c \in \HH1.X.\Omegap X1.$ is a \emph{\kahler class} on $X$ if there exists a \kahler metric $\omega$ on $X$ such that $c = [\omega]$.
\item We say that $X$ is \emph{\kahler} if there exists a \kahler metric on $X$.
\end{enumerate}
\end{dfn}

Of course, a \kahler class $[\omega]$ can be mapped further to $\HH1.X.\Omegar X1.$, where $\Omegar X1$ is the sheaf of reflexive differentials.
Knowing that the class comes from the \kahler differentials however provides useful additional information: it shows that the class can be pulled back along arbitrary morphisms, even in situations where the extension theorem of~\cite{KebekusSchnell18} cannot be applied.

\section{A criterion for algebraic approximability}

The following theorem generalizes the Green--Voisin criterion for algebraic approximability mentioned as step~\textsf{(B)} in the introduction \vpageref{stepsABC}.
Let $X$ be a reduced complex space.
To any class $c \in \HH1.X.\Omegap X1.$, one can associate a linear map
\[ \alpha_c \from \HH1.X.\T X. \xrightarrow{c \cup -} \HH2.X.\Omegap X1 \tensor \T X. \xrightarrow{\text{contraction}} \HH2.X.\O X.. \]

\begin{thm}[Approximation criterion for \lt deformations] \label{approx crit lt}
Let $X$ be a compact \kahler space with rational singularities, and let $\pi \from \frX \to S$ be a locally trivial deformation of $X$ over a \emph{smooth} base $S$.
Denote by
\[ \kappa(\pi) \from T_0 S \lto \HH1.X.\T X. \]
the Kodaira--Spencer map of $\pi$.
Assume that for some \kahler class $[\omega] \in \HH1.X.\Omegap X1.$, the composition
\[ \alpha_{[\omega]} \circ \kappa(\pi) \from T_0 S \lto \HH2.X.\O X. \]
is surjective.
Then $\pi$ is a strong algebraic approximation of $X$.
\end{thm}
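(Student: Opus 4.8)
The plan is to adapt the classical Green--Voisin mechanism to the singular, \lt setting: starting from the \kahler class $[\omega]$ on the central fibre, I would approximate it by rational classes and deform the complex structure slightly so that such a rational class becomes of type $(1,1)$ on a nearby fibre, and then invoke a singular version of Kodaira's embedding theorem to conclude that the corresponding fibre is projective. Surjectivity of $\alpha_{[\omega]} \circ \kappa(\pi)$ will enter precisely as the condition that guarantees, via the implicit function theorem, that enough nearby fibres can be produced, and in fact a dense set of them, which yields the \emph{strong} conclusion.

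First I would set up a \emph{period section}. After shrinking $S$ to a contractible representative, local triviality makes $\pi$ a topological fibre bundle, so $R^2\pi_* \underline{\C}_{\frX}$ is a constant local system and I may fix an identification $\HH2.{X_s}.\C. \isom \HH2.X.\C. \ndef H$ compatible with the rational structure $\HH2.X.\Q.$. As recorded in the preliminaries, in the \lt case $R^2\pi_*\O\frX$ is locally free with fibre $\HH2.{X_s}.\O{X_s}.$; call this bundle $\sF$. The sheaf map $\underline\C_\frX \to \O\frX$ then induces an $\O_S$-linear bundle map $H \tensor_\C \O S \to \sF$ whose fibre over $s$ is the projection $q_s \from H \to \HH2.{X_s}.\O{X_s}.$, and for the fixed class $[\omega]$ this produces a holomorphic section $\sigma \from S \to \sF$, $s \mapsto q_s([\omega])$, with $\sigma(0) = 0$ because $[\omega]$ is of type $(1,1)$ on $X$. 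Since $X$ has rational singularities I would identify $\HH2.{X_s}.\O{X_s}. = \HH2.{Y_s}.\O{Y_s}. = H^{0,2}(Y_s)$, where $Y_s$ is the fibre of the simultaneous resolution; thus $q_s(c) = 0$ means exactly that $f_s^* c$ has vanishing $(0,2)$-part on the smooth fibre $Y_s$.

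The crucial point is to identify the derivative $\d\sigma_0 \from T_0 S \to \sF_0 = \HH2.X.\O X.$ with a nonzero multiple of $\alpha_{[\omega]} \circ \kappa(\pi)$. On a smooth fibre this is Griffiths' formula for the differential of the period map: the variation of the Hodge filtration is cup product with the Kodaira--Spencer class, and for a $(1,1)$-class the induced change of the $(0,2)$-part is $\kappa(\pi)(v) \lrcorner [\omega]$. To make this rigorous I would pass to the simultaneous resolution $g \from \frY \to \frX$, which is a \lt deformation of $Y = \cR(X)$, run the classical computation on $\frY \to S$, and transport the outcome back to $X$ using rational singularities together with \cref{gkkp}, which matches the cup-and-contract operation $\alpha_{[\omega]}$ on $X$ with its Hodge-theoretic counterpart on $Y$. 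I expect this identification to be the main obstacle, since it requires carefully reconciling the intrinsic data $\kappa(\pi)$, $[\omega]$ and the contraction $\Omegap X1 \tensor \T X \to \O X$ on the singular space $X$ with the classical Hodge theory on the resolution.

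Granting $\d\sigma_0 = \alpha_{[\omega]} \circ \kappa(\pi)$, the hypothesis makes $\d\sigma_0$ surjective, and I would finish with a submersion argument. Trivialising $\sF$ near $0$, consider the real-analytic map $F \from H_\R \x S \to \sF_0$, $F(c,s) = q_s(c)$, where $H_\R \defn \HH2.X.\R.$; it is $\R$-linear in $c$, holomorphic in $s$, and $F([\omega],0) = 0$. Its partial derivative $\partial_s F([\omega],0) = \d\sigma_0$ is surjective, so $Z \defn F\inv(0)$ is a smooth submanifold near $([\omega],0)$ on which both projections to $H_\R$ and to $S$ are submersions, the latter using also that $\partial_c F([\omega],0) = q_0$ is surjective (a consequence of rational singularities and Hodge theory on $Y$). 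Since $\mathrm{pr}_{H_\R} \from Z \to H_\R$ is open and $\HH2.X.\Q.$ is dense in $H_\R$, the locus $Z_\Q \subset Z$ of points with rational first coordinate is dense in $Z$; as $\mathrm{pr}_S \from Z \to S$ is open, $\mathrm{pr}_S(Z_\Q)$ is then dense in a neighbourhood of $0 \in S$. Finally, for $(c,s) \in Z_\Q$ near $([\omega],0)$ the class $c$ is rational and, being real with $q_s(c)=0$, is of type $(1,1)$ on $X_s$; it is moreover \kahler on $X_s$ because it lies near the \kahler class $[\omega]$ of the central fibre, which I would justify by openness of the \kahler cone in \lt families. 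A rational \kahler class on the normal compact \kahler space $X_s$ with rational singularities forces $X_s$ to be projective by the singular analogue of Kodaira's embedding theorem, so $\mathrm{pr}_S(Z_\Q) \subset S^{\mathrm{alg}}$ is dense near $0$; this is exactly the assertion that $\pi$ is a strong algebraic approximation of $X$.
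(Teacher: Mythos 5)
Your overall strategy --- running the Green--Voisin period-map/density argument directly on the singular family $\pi \from \frX \to S$ --- is genuinely different from the paper's proof, which instead passes to a simultaneous resolution $g \from \frY \to \frX$, perturbs the pulled-back class $[\bar g^*\omega]$ by $\eps\cdot\cc1L$ for a $\bar g$-ample line bundle $L$ so as to obtain an honest \kahler class on the smooth total space $\frY$, applies Voisin's criterion there verbatim, and finally descends projectivity from $Y_t$ to $X_t$ via Namikawa's criterion (Moishezon $+$ \kahler $+$ rational singularities $\Rightarrow$ projective). The skeleton of your density argument (the section $s\mapsto q_s([\omega])$, the submersion $F$, density of the rational locus of $Z$) is sound and mirrors Voisin's proof; your two intermediate claims --- the identification $\d\sigma_0 = \alpha_{[\omega]}\circ\kappa(\pi)$ and the surjectivity of $\HH2.X.\R. \lto \HH2.X.\O X.$ --- are not ``recorded in the preliminaries'' as you assert and each needs a real argument (the latter amounts to a Du Bois-type surjectivity for rational singularities), but both are plausibly fillable by comparison with the resolution.

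The genuine gap is in your last step. Having produced a rational class $c$ on a nearby \emph{singular} fibre $X_s$ with $q_s(c)=0$, you conclude that $c$ is a \kahler class on $X_s$ ``by openness of the \kahler cone in \lt families'' and then invoke ``the singular analogue of Kodaira's embedding theorem''. Neither is available in the generality you need. A \kahler class on a singular space in the sense of \cref{kahler def} is the class of an element of $\KK{X_s}(X_s)$ admitting strictly plurisubharmonic local potentials; there is no established statement that the set of such classes is open in $\HH2.X_s.\R.$ (or in $\HH1.X_s.\Omegap{X_s}1.$), nor that it behaves lower-semicontinuously in a \lt family. Passing from a topological $2$-class killed by $q_s$ back to an actual \kahler metric would require a $\del\delbar$-type lemma on the singular fibre that you do not have; what \cite[Thm.~6.3]{Bin83} provides is only that nearby fibres are \kahler for \emph{some} metric, not that a prescribed nearby class is \kahler. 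This is exactly the difficulty the paper's proof is engineered to avoid: every openness-of-the-\kahler-cone and Kodaira-embedding argument takes place on the smooth fibres $Y_t$, and the only input needed on the singular fibres is Namikawa's projectivity criterion. To repair your argument you would either have to prove such an openness statement for singular \kahler cones, or route the endgame through the resolution as the paper does --- at which point the $\eps\cdot\cc1L$ perturbation becomes unavoidable, since $\bar g^*\omega$ is only semipositive on $Y$.
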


Recall that the Kodaira--Spencer map $\kappa(\pi)$ for a \lt deformation of a singular space can be defined much in the same way as in the smooth case, cf.~e.g.~\cite[Sec.~9.1.2]{Voi02}.
In particular, this implies that the left-hand square in the commutative diagram below actually is commutative.

\begin{proof}
Let $g \from \frY \to \frX$ be a simultaneous resolution of $\frX \to S$ with $\bar g \from Y \to X$ the restriction to the central fibre, and let $f \defn \pi \circ g \from \frY \to S$ be the composition.
Furthermore, set $[\wt\omega] \defn [\bar g^* \omega]$.
Then there is the following commutative diagram:
\[ \xymatrix{
T_0 S \ar@{=}[d] \ar^-{\kappa(f)}[rr] & & \HH1.Y.\T Y. \ar^-{\alpha_{[\wt\omega]}}[rr] & & \HH2.Y.\O Y. \ar@{=}[d] \\
T_0 S \ar^-{\kappa(\pi)}[rr] & & \HH1.X.\T X. \ar@{ ir->}[u] \ar^-{\alpha_{[\omega]}}[rr] & & \HH2.X.\O X.
} \]
The vertical map on the left is the identity.
The map in the middle comes from the fact that $\bar g_* \T Y = \T X$, which holds because $\bar g$ is the functorial resolution of~$X$ (see~\cite[Thm.~4.2]{GK13}).
The vertical map on the right is an isomorphism since $X$ has rational singularities.
Thus we see that
\[ \alpha_{[\wt\omega]} \circ \kappa(f) \from T_0 S \lto \HH2.Y.\O Y. \]
is still surjective.
Of course, $[\wt\omega]$ is not \kahler unless $X$ is already smooth.
But as~$g$ (and hence also $\bar g$) is a projective morphism, there exists a $\bar g$-ample line bundle $L$ on $Y$.
For $\eps > 0$, consider the class $[\wt\omega_\eps] \defn [\wt\omega] + \eps \cdot \cc1L$ and observe that $\alpha_{[\wt\omega_\eps]} = \alpha_{[\wt\omega]} + \eps \cdot \alpha_{\cc1L}$.
Hence we can choose $\eps$ sufficiently small such that the map $\alpha_{[\wt\omega_\eps]} \circ \kappa(f)$ remains surjective and the class $[\wt\omega_\eps]$ is \kahler~\cite[Prop.~II.1.3.1(vi)]{Var89}.
Thus $f$ is a strong algebraic approximation of~$Y$ by the Green--Voisin criterion~\cite[Prop.~5.20]{Voi03}.

This implies that $\pi$ is a strong algebraic approximation of $X$:
As we assumed $X$ to be a compact \kahler space with rational singularities, we may shrink $S$ around its base point such that for every $t\in S$ the fibre $X_t$ is still \kahler~\cite[Thm.~6.3]{Bin83} with rational singularities (by local triviality of $\pi$, or by~\cite[Th\'eor\`eme~4]{Elkik78}).
If the fibre $Y_t$ for some $t \in S$ is projective, then $X_t$ is Moishezon and we conclude from~\cite[Thm.~1.6]{Nam02} that $X_t$ is also projective.
\end{proof}

\section{Surjectivity of cupping with a \kahler class}

In this section, we generalize step~\textsf{(C)} from the introduction \vpageref{stepsABC} to our singular setting.
We then prove \cref{main unobstructed}.

\begin{thm} \label{hard lefschetz}
Let $X$ be a normal compact complex space with klt singularities and $K_X \sim_\Q 0$.
Then for any \kahler class $[\omega] \in \HH1.X.\Omegap X1.$, the map $\alpha_{[\omega]}$ is surjective.
\end{thm}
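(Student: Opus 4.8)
The plan is to imitate the smooth template of step~\textsf{(C)}: trivialize the canonical sheaf, use the trivializing section to rewrite $\alpha_{[\omega]}$ as a Lefschetz-type cup-product operator on reflexive cohomology, and then invoke Hard Lefschetz.

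First I would reduce to the case $K_X \sim 0$. Let $g_1 \from X_1 \to X$ be the index one cover, a cyclic quasi-\'etale Galois cover with group $G = \Z/m$; then $X_1$ is again normal klt with $\can{X_1} \isom \O{X_1}$, and since $g_1$ is finite, $X_1$ is compact \kahler with $g_1^* [\omega]$ a \kahler class. All sheaves and maps entering the definition of $\alpha$ are canonical, hence $G$-equivariant, and since $\O X = (g_{1*} \O{X_1})^G$ with $g_1$ finite, the cohomology of $X$ is the $G$-invariant part of the cohomology of $X_1$. As taking $G$-invariants is exact over~$\C$, surjectivity of $\alpha_{g_1^*[\omega]}$ on $X_1$ gives surjectivity of $\alpha_{[\omega]}$ on $X$. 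So I may assume $\can X \isom \O X$ and fix a generator $\sigma \in \Hnought X.\can X.$, nowhere vanishing on $U = \Reg X$.

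On $U$ contraction with $\sigma$ gives an isomorphism $\T U \bij \Omegap U{n-1}$, $v \mapsto \iota_v \sigma$; pushing forward along the big open immersion $j \from U \inj X$ yields reflexive isomorphisms $\T X \isom \Omegar X{n-1}$ and $\can X \isom \O X$. The pointwise identity $\beta \wedge \iota_v \sigma = \beta(v)\, \sigma$ on $U$ shows that, under these identifications, the contraction $\Omegap X1 \tensor \T X \to \O X$ is intertwined with the wedge map $\Omegar X1 \tensor \Omegar X{n-1} \to \Omegar Xn$. Consequently $\alpha_{[\omega]}$ becomes the reflexive Lefschetz operator
\[ L_{[\omega]} = \big([\omega] \cup - \big) \from \HH{1}.X.\Omegar X{n-1}. \lto \HH{2}.X.\Omegar Xn. \]
and it remains to prove that this is surjective. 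Let $\bar g \from Y = \cR(X) \to X$ be the functorial resolution, so that $Y$ is compact \kahler. Using $\bar g_* \Omegap Yp = \Omegar Xp$ (\cite[Cor.~1.8]{KebekusSchnell18}) and the Leray edge maps, one obtains natural pullbacks $\bar g^* \from \HH{q}.X.\Omegar Xp. \to \HH{q}.Y.\Omegap Yp.$, compatible with cup products and carrying $[\omega]$ to $\bar g^*[\omega]$ (this class pulls back because it comes from \kahler differentials); for $p = 0$ these are isomorphisms as $X$ has rational singularities. In particular the target satisfies $\HH{2}.X.\Omegar Xn. \isom \HH{2}.X.\O X. \isom \HH{2}.Y.\O Y.$. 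On the smooth \kahler manifold $Y$, Hard Lefschetz for a \kahler class gives that cupping once is surjective from $\HH{1}.Y.\Omegap Y{n-1}.$ onto $\HH{2}.Y.\Omegap Yn.$, the total degree~$n$ being above the middle. I would then transport this surjectivity to $X$ by exhibiting $\bar g^*$ as an injection of $\HH{n}.X.\C.$ onto a sub-Hodge-structure of $\HH{n}.Y.\C.$ preserved by $\bar g^*[\omega]$, on which the Lefschetz maps restrict.

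\emph{Main obstacle.} The class $\bar g^*[\omega]$ is only nef and big---\kahler merely on $\bar g\inv(U)$---so Hard Lefschetz on $Y$ cannot be applied to it directly, while the genuine \kahler classes of $Y$ (e.g.\ $\bar g^*[\omega] + \eps \cdot \cc1L$ for $\bar g$-ample $L$) do not descend to $X$ and need not preserve the reflexive subspaces. The crux is therefore to establish a Hard Lefschetz theorem for the \kahler class $[\omega]$ at the level of the klt space $X$ itself. I expect to obtain this either by invoking the \kahler package for reflexive differentials on klt compact \kahler spaces, so that $\bigoplus_{p+q=n} \HH{q}.X.\Omegar Xp.$ is a Hodge structure polarized by $[\omega]$, or by verifying that $\bar g^*[\omega]$ already polarizes the sub-Hodge-structure $\bar g^* \HH{n}.X.\C. \subset \HH{n}.Y.\C.$, the loss of positivity being confined to the exceptional locus, which contributes nothing to the $X$-part. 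Granting such a Hard Lefschetz, surjectivity of $L_{[\omega]}$ in bidegree $(n-1,1) \to (n,2)$ is immediate---equivalently, by duality, the injectivity of $L_{[\omega]}$ just below the middle degree.
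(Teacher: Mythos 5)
Your reduction steps match the paper's: pass to the index one cover (using exactness of $G$-invariants and the fact that $[\omega]$ pulls back to a \kahler class), then contract with a trivializing section $\sigma$ of $\can X$ to identify $\alpha_{[\omega]}$ with the cup product $\HH1.X.\Omegar X{n-1}. \to \HH2.X.\can X.$, and compare with the functorial resolution $\bar g \from Y \to X$ using rational singularities and Grauert--Riemenschneider. The problem is that the proof stops exactly where the real difficulty begins: you correctly identify as the ``main obstacle'' that $\bar g^*[\omega]$ is only nef and big on $Y$, so Hard Lefschetz does not apply to it, while genuine \kahler classes on $Y$ do not respect the subspaces coming from $X$ --- and then you only gesture at two ways out (a \kahler package for reflexive differentials on klt \kahler spaces, or a polarization of the sub-Hodge structure $\bar g^* \HH n.X.\C.$), neither of which is carried out and neither of which is available off the shelf. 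Indeed, the paper's own \cref{main 3} needs strong extra hypotheses precisely because Hodge theory for reflexive forms on klt spaces can fail; and the assertion that the loss of positivity of $\bar g^*[\omega]$ along $\Exc(\bar g)$ ``contributes nothing to the $X$-part'' is exactly the statement that requires proof. So as written this is a genuine gap, not a routine verification.

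The paper closes this gap by a different and more elementary device, which you may want to compare with your plan. Instead of proving surjectivity of the single cup product $\HH1.Y.\Omegap Y{n-1}. \to \HH2.Y.\can Y.$ with the degenerate class, it precomposes with $\HH0.Y.\Omegap Y{n-2}. \to \HH1.Y.\Omegap Y{n-1}.$ and shows that the \emph{double} cup product $[f^*\omega]^2 \cup - \from \HH0.Y.\Omegap Y{n-2}. \to \HH2.Y.\can Y.$ is surjective; a diagram chase through the commutative square over $X$ then yields surjectivity of the map you want. The point of dropping to $\mathrm H^0$ is that a global holomorphic $(n-2)$-form $\alpha$ is automatically $\d$-closed, so the class $[f^*\omega]^2 \cup \alpha$ can be tested by the literal integral $\int_Y \alpha \wedge \overline\alpha \wedge f^*\omega^2$, which is strictly positive by the \emph{pointwise} Hodge--Riemann relations on $Y \setminus (\Exc(f) \cup \{\alpha = 0\})$ --- a set of full measure. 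This gives injectivity, hence surjectivity since source and target have equal dimension by Hard Lefschetz (for an honest \kahler class) on $Y$. No Hard Lefschetz for $f^*\omega$ itself, and no Hodge theory on the singular $X$, is ever needed. If you want to salvage your route, you would have to supply a harmonic-representative or polarization argument for classes in $\HH1.Y.\Omegap Y{n-1}.$ with respect to the degenerate class $\bar g^*[\omega]$, which is substantially harder than the $\mathrm H^0$ trick.
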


\begin{proof}
Let $\pi \from X_1 \to X$ be the index one cover of $X$.
Then there is a finite (cyclic) group $G$ acting on $X_1$ such that $\pi$ is the quotient map.
Since $K_X \sim_\Q 0$, the map $\pi$ is quasi-\'etale and hence the $G$-action is free in codimension one.
Furthermore $X_1$ has canonical (in particular, rational) singularities and $K_{X_1}$ is trivial~\cite[Prop.~5.20]{KM98}.
The pullback $[\pi^* \omega]$ is a $G$-invariant \kahler class on $X_1$ by~\cite[Prop.~3.5]{TorusQuotients}.
Assume for the moment that we can prove surjectivity of the $G$-equivariant map $\alpha_{[\pi^* \omega]}$.
Then, as taking $G$-invariants is an exact functor~\cite{Mas99}, also the restriction to the invariant subspaces is surjective.
By~\cite[Lemma~5.3]{AlgApprox}, we obtain a commutative diagram
\[ \xymatrix{
\HH1.X_1.\T{X_1}. \ar^-{\alpha_{[\pi^* \omega]}}@{->>}[rr] & & \HH2.X_1.\O{X_1}. \\
\HH1.X.\T X. \ar^-{\alpha_{[\omega]}}@{->>}[rr] \ar@{ ir->}[u] & & \HH2.X.\O X. \ar@{ ir->}[u] \\
} \]
Thus we may replace $X$ by $X_1$ and make the following additional assumption.

\begin{awlog}
The canonical sheaf of $X$ is trivial, $\can X \isom \O X$.
\end{awlog}

Fix a global generator $\sigma \in \HH0.X.\can X.$.
Multiplication by $\sigma$ gives an isomorphism $\O X \bij \can X$, and contracting $\sigma$ by vector fields similarly yields $\T X \bij \Omegar X{n-1}$.
We obtain the following diagram (whose vertical maps depend on the choice of $\sigma$):
\[ \xymatrix{
\HH1.X.\Omegar X{n-1}. \ar^-{[\omega] \cup -}[rr] & & \HH2.X.\can X. \\
\HH1.X.\T X. \ar^-{\alpha_{[\omega]}}[rr] \ar_-{\rotatebox{90}{$\sim$}}[u] & & \HH2.X.\O X. \ar_-{\rotatebox{90}{$\sim$}}[u]
} \]

\begin{clm} \label{597}
The above diagram is commutative.
\end{clm}

\begin{proof}[Proof of \cref{597}]
We will compute in \v Cech cohomology with respect to a fixed Leray cover $\cU \defn \set{ U_i \mid i \in I}$ of $X$.
Represent the given \kahler class $[\omega]$ by a cocycle $(\omega_{ij})$ with $\omega_{ij} \in \Hnought U_{ij}.\Omega_X^1.$, and pick an arbitrary class $[(v_{ij})] \in \HH1.X.\T X.$.
Recall that quite generally, the cup product of two $1$-cocycles $(\alpha_{ij})$ and $(\beta_{ij})$ is the $2$-cocycle given by $(\alpha_{ij} \tensor \beta_{jk})$.
Hence, if we first move upwards and then to the right, we obtain the cocycle $\big( \omega_{ij} \wedge \iota_{v_{jk}}(\sigma) \big)$, where $\iota$ denotes contraction.
If we go the other way round, we end up at $\big( \iota_{v_{jk}}(\omega_{ij}) \cdot \sigma \big)$.
These two cocycles are cohomologous.
In fact, they are equal on each open set $U_{ijk}$ because $\iota$ is an antiderivation and hence
\[ 0 = \iota_{v_{jk}} \big( \underbrace{\omega_{ij} \wedge \sigma}_{=0} \big) = \iota_{v_{jk}}(\omega_{ij}) \wedge \sigma - \omega_{ij} \wedge \iota_{v_{jk}}(\sigma). \qedhere \]
\end{proof}

By \cref{597}, it suffices to show that $\HH1.X.\Omegar X{n-1}. \xrightarrow{[\omega] \cup -} \HH2.X.\can X.$ is surjective.
To this end, consider a strong projective resolution $f \from Y \to X$ and the commutative diagram
\begin{equation} \label{cd hl}
\xymatrix{
\HH0.Y.\Omegap Y{n-2}. \ar^-{[f^*\omega] \cup -}[rr] \ar@{ ir->}[d] & & \HH1.Y.\Omegap Y{n-1}. \ar^-{[f^*\omega] \cup -}[rr] & & \HH2.Y.\can Y. \\
\HH0.X.\Omegar X{n-2}. \ar^-{[\omega] \cup -}[rr] & & \HH1.X.\Omegar X{n-1}. \ar[rr]^-{[\omega] \cup -} & & \HH2.X.\can X.. \ar@{=}[u] %\ar@{=}_-{\text{\cite{GR70}}}[u] %\ar@{ ir->}[u]
}
\end{equation}

\begin{clm} \label{cd hl 1}
The vertical maps in the above diagram are isomorphisms.
\end{clm}

\begin{proof}
Since $X$ has rational singularities, we have $f_* \can Y = \can X$ and the vertical map on the right is the natural map $\HH2.X.f_* \can Y. \to \HH2.Y.\can Y.$.
This is an isomorphism since $\RR i.f.\can Y. = 0$ for $i > 0$ by Grauert--Riemenschneider vanishing~\cite[Thm.~1]{Takegoshi85}.
The vertical map on the left is the pushforward map for reflexive differentials.
It is an isomorphism by~\cite[Cor.~1.8]{KebekusSchnell18}, although we actually do not use this fact.
\end{proof}

\begin{clm} \label{cd hl 2}
The upper horizontal map $\HH0.Y.\Omegap Y{n-2}. \xrightarrow{[f^*\omega]^2 \cup -} \HH2.Y.\can Y.$ in the above diagram is surjective.
\end{clm}

\begin{proof}
By the Hard Lefschetz theorem on the compact \kahler manifold $Y$ (or by the Hodge $*$-operator), the vector spaces $\HH0.Y.\Omegap Y{n-2}.$ and $\HH2.Y.\can Y.$ have the same dimension.
It is therefore sufficient to prove injectivity of the map in question.
So let $\alpha \in \HH0.Y.\Omegap Y{n-2}. \setminus \set0$ be a nonzero holomorphic $(n-2)$-form.
We claim that
\begin{equation} \label{int}
\underbrace{\mathrm i^{n-2} (-1)^{\frac{(n-2)(n-3)}2}}_{= \mathrm i^{n^2}} \int_Y \alpha \wedge \overline\alpha \wedge f^* \omega^2 > 0.
\end{equation}
Indeed, set $E = \Exc(f)$ and fix a volume form $\vol$ on $Y$ which on $Y \setminus E$ is compatible with the orientation induced by $f^* \omega$.
Then outside of $E_\alpha \defn E \cup \set{ \alpha = 0}$, the integrand in~\labelcref{int} is a strictly positive multiple of $\vol$ by the (pointwise) Hodge--Riemann bilinear relations~\cite[Cor.~1.2.36]{Huy05}.
Being an analytic subset of lower dimension, $E_\alpha$ has measure zero.
Inequality \labelcref{int} follows.

By Stokes' theorem and~\labelcref{int}, the form $\alpha \wedge \overline\alpha \wedge f^* \omega^2$ cannot be $\d$-exact.
On the other hand, the global holomorphic form $\alpha$ is $\d$-closed, so $\d\overline\alpha = \overline{\d\alpha} = 0$.
Thus also $\alpha \wedge f^* \omega^2$ is not $\d$-exact.
Equivalently, by the $\del\delbar$-lemma, $\alpha \wedge f^* \omega^2$ is not $\delbar$-exact.
Interpreting $\HH2.Y.\can Y.$ as a Dolbeault cohomology group, the latter form represents the class $[f^* \omega]^2 \cup \alpha$, which is therefore nonzero.
\end{proof}

The desired surjectivity of the lower right map in diagram~\labelcref{cd hl} now follows from \cref{cd hl 2} and a simple diagram chase, ending the proof of \cref{hard lefschetz}.
\end{proof}

\begin{rem-plain}
\cref{hard lefschetz} continues to hold for $X$ with rational singularities as long as $\can X$ is trivial.
However, taking the index one cover in general does not preserve rationality and then the proof has some problems.
An easy example is given by the (rational) cone over an Enriques surface, which has a quasi-\'etale cover by the (non-rational) cone over the corresponding K3 surface.
\end{rem-plain}

\subsection*{Proof of \cref{main unobstructed}}

Let $X$ be a normal compact \kahler space with klt singularities and $K_X \sim_\Q 0$.
We assume that the base of the semi\-universal \lt deformation $\pi \from \frX \to \Deflt(X)$ is smooth.
Let $[\omega] \in \HH1.X.\Omegap X1.$ be a \kahler class on~$X$.
By \cref{hard lefschetz}, cupping with $[\omega]$ induces a surjective map $\alpha_{[\omega]} \from \HH1.X.\T X. \lto \HH2.X.\O X.$.
By semiuniversality of $\pi$, the Kodaira--Spencer map $\kappa(\pi) \from T_0 \Deflt(X) \lto \HH1.X.\T X.$ is an isomorphism, and the composition
\[ \alpha_{[\omega]} \circ \kappa(\pi) \from T_0 \Deflt(X) \lto \HH2.X.\O X. \]
is surjective.
By \cref{approx crit lt}, it follows that $\pi$ is a strong algebraic approximation of~$X$. \qed

\section{Locally trivial unobstructedness}

\stepcounter{thm}

\subsection{$T^1$-lifting principle for locally trivial deformations}

Let $X$ be a normal compact complex space.
We want to prove smoothness of $\Deflt(X)$ under the assumptions of Theorems~\labelcref{main 2} and~\labelcref{main 3}, respectively.
In both cases, our aim is to apply Kawamata--Ran's $T^1$-lifting principle~\cite{Ran92, KawamataT1Lifting92, Kawamata97, FantechiManetti99}.
For this, let $f \from \frX \to S = \Spec A$ be an infinitesimal \lt deformation of $X$, let $S' = \Spec A' \subset S$ be a closed subscheme and $f' \from \frX' \to S'$ the induced deformation.
Then the $T^1$-lifting property boils down to surjectivity of the canonical map
\begin{equation} \label{648}
\HH1.\frX.\T{\frX/S}. \lto \HH1.\frX'.\T{\frX'/S'}..
\end{equation}

\subsection{Proof of \cref{main 2}}

With notation as above, assume additionally that $\HH2.X.\T X. = 0$.
To apply the $T^1$-lifting principle, it actually suffices to check surjectivity of~\labelcref{648} in the special case $A = A_{n + 1}$ and $A' = A_n$, where $A_n \defn \factor{\C[t]}{(t^{n+1})}$ and the map $A \to A'$ is the natural one~\cite[Thm.~1]{KawamataT1Lifting92}.
For every given $n \in \N_0$, the short exact sequence
\[ 0 \lto (t^{n+1}) \lto A_{n+1} \lto A_n \lto 0 \]
induces by flatness (cf.~the proof of \cref{531})
\[ 0 \lto \O X \lto \O \frX \lto \O{\frX'} \lto 0, \]
which after tensorizing with $\T{\frX/S}$ yields
\begin{equation} \label{T1 ses}
0 \lto \T X \lto \T{\frX/S} \lto \T{\frX'/S'} \lto 0.
\end{equation}
For exactness in the middle and on the right, cf.~the proof of~\cref{gkkp}.
Injectivity on the left is best seen using a local trivialization $\frX \isom X \x S$ and the description of $\T{\frX/S}$ as $p^* \T X$, where $p \from \frX \to X$ is the first projection.
The long exact sequence associated to~\labelcref{T1 ses}, combined with the assumption $\HH2.X.\T X. = 0$, now immediately yields surjectivity of the map~\labelcref{648}, completing the proof. \qed

\subsection{Proof of \cref{main 3}}

We drop the assumption $\HH2.X.\T X. = 0$ and instead assume $X$ to be \kahler with rational singularities and $\can X \isom \O X$.
By \cref{rel can} below, also $\can{\frX/S} \isom \O{\frX}$.
By contraction, we obtain an isomorphism $\T{\frX/S} \isom \T{\frX/S} \tensor \can{\frX/S} = \Omegar{\frX/S}{n-1}$.
Hence the surjectivity of~\labelcref{648} is equivalent to the surjectivity of
\begin{equation} \label{655}
\HH1.\frX.\Omegar{\frX/S}{n-1}. \lto \HH1.\frX'.\Omegar{\frX'/S'}{n-1}..
\end{equation}
We will show that $\RR1.f.\Omegar{\frX/S}{n-1}.$ is locally free and commutes with base change.
In this case,~\labelcref{655} simply is the canonical map
\[ \HH1.\frX.\Omegar{\frX/S}{n-1}. \lto \HH1.\frX.\Omegar{\frX/S}{n-1}. \tensor_A A', \]
which is surjective since $A \to A'$ is.

\addtocounter{thm}{-1}

\begin{lem} \label{rel can}
Let $f \from \frX \to S = \Spec A$ be an infinitesimal \lt deformation of a normal compact \kahler space $X$ with rational singularities and $\can X \isom \O X$.
Then also the relative canonical sheaf $\can{\frX/S}$ is trivial.
\end{lem}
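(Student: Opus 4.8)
The plan is to reduce the statement to Hodge theory on a \emph{smooth} family by passing to a simultaneous resolution, and then to trivialize $\can{\frX/S}$ by lifting a generator of $\can X$ off the central fibre. First I would choose a simultaneous resolution $g \from \frY \to \frX$ of $f$, which exists because the deformation is locally trivial, write $\bar g \from Y \to X$ for its restriction to the central fibre and $h \defn f \circ g \from \frY \to S$ for the composition. Recall that $h$ is smooth and proper and that $Y = \cR(X)$ is a compact \kahler manifold. By \cref{gkkp} (applied with $p = \dim X$) we have $g_* \can{\frY/S} = \can{\frX/S}$, and therefore $f_* \can{\frX/S} = h_* \can{\frY/S}$.

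Next I would analyse the free rank of $h_* \can{\frY/S} = \RR0.h.\Omegap{\frY/S}n.$. On the central fibre, rationality of the singularities of $X$ gives $\bar g_* \can Y = \can X$ and $\RR i.{\bar g}.\can Y. = 0$ for $i>0$, so that $\HH0.Y.\can Y. = \HH0.X.\can X. = \HH0.X.\O X. = \C$, using $\can X \isom \O X$ and that $X$ is compact and connected. Since $h$ is smooth and proper with compact \kahler fibres, the relative Hodge--de Rham spectral sequence degenerates at $E_1$; consequently $\RR0.h.\Omegap{\frY/S}n.$ is locally free over $A$ and commutes with base change. As $S = \Spec A$ is an Artinian point, this says that $f_* \can{\frX/S} = h_* \can{\frY/S}$ is a \emph{free} $A$-module of rank $\hh0.Y.\can Y. = 1$, whose reduction modulo $\frm_A$ is canonically identified with $\HH0.X.\can X.$.

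I would then fix a trivializing section $\sigma_0 \in \HH0.X.\can X.$ and let $\sigma \in \HH0.\frX.\can{\frX/S}. = f_* \can{\frX/S}$ be an $A$-module generator of this free rank-one module, rescaled so that its image under $f_* \can{\frX/S} \tensor_A \C \isom \HH0.X.\can X.$ is $\sigma_0$; equivalently, $\sigma$ restricts to $\sigma_0$ on the central fibre. To see that $\sigma$ generates $\can{\frX/S}$ as a sheaf, I argue pointwise: at every $x \in \frX$ (topologically $\frX = X$) the residue field is $\C$ and, since the central-fibre restriction of $\can{\frX/S}$ is $\can X$, one has $(\can{\frX/S})_x \tensor \C = (\can X)_x \tensor \C$, in which $\sigma$ maps to $\sigma_0$, a generator. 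Nakayama's lemma then shows $\sigma_x$ generates the stalk $(\can{\frX/S})_x$, producing a surjection $\O\frX \to \can{\frX/S}$, $1 \mapsto \sigma$. Finally both sheaves are flat over $A$ and this map restricts on the central fibre to the isomorphism $\O X \bij \can X$; writing $K$ for its kernel, flatness of $\can{\frX/S}$ yields $K \tensor_A \C = 0$, and since $\frm_A$ is nilpotent, $K = \frm_A K = \dots = 0$. Hence $\O\frX \isom \can{\frX/S}$, as desired.

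I expect the main obstacle to be the Hodge-theoretic input in the second step, namely that $\RR0.h.\can{\frY/S}.$ is locally free and base-change-compatible over the Artinian base; this rests on the $E_1$-degeneration of the relative Hodge--de Rham spectral sequence for the smooth proper family $h$ with compact \kahler fibres. Once this is in place everything else is formal, and I would emphasize that the identification $\HH0.Y.\can Y. = \HH0.X.\can X.$ underlying the rank computation crucially uses the rationality of the singularities of $X$ via \cref{gkkp}.
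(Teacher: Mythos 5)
Your proposal is correct and follows essentially the same route as the paper: simultaneous resolution, Deligne's degeneration theorem to get local freeness and base change of the pushforward of the relative canonical sheaf, \cref{gkkp} to identify $f_*\can{\frX/S}$ with $h_*\can{\frY/S}$, and then lifting a trivializing section of $\can X$ to a global section of $\can{\frX/S}$ that is checked to be an isomorphism by reduction to the central fibre. The only cosmetic difference is in the last step, where the paper reuses the five-lemma induction from the proof of \cref{gkkp} while you invoke Nakayama plus nilpotence of $\frm_A$; these are interchangeable.
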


\begin{proof}
Let $g \from \frY \to \frX$ be a simultaneous resolution of $\frX \to S$.
Then the composition $\pi \from \frY \to S$ is a deformation of the compact \kahler manifold $Y$, where $\bar g \from Y \to X$ is the functorial resolution of the central fibre $X$.

By~\cite[Th\'eor\`eme~5.5]{Deligne68}, the sheaf $\pi_* \can{\frY/S}$ is locally free (hence free, as $A$ is local).
Since it also satisfies base change, its rank may be calculated from the central fibre, $\hh0.Y.\can Y. = \hh0.X.\can X. = 1$.
As $\pi_*(-) = f_* g_* (-)$, we thus obtain from \cref{gkkp} that $f_* \can{\frX/S} \isom \O S$.
Pick a global section $\sigma$ of $\can{\frX/S}$ that under the identification
\[ \HH0.\frX.\can{\frX/S}. = \HH0.S.f_* \can{\frX/S}. \isom \HH0.S.\O S. = A \]
corresponds to an element of $A^\x$, that is, to a unit of $A$.
This gives rise to a map $\sigma \from \O{\frX} \lto \can{\frX/S}$, whose restriction to the central fibre is an isomorphism.
By the same argument as in the proof of \cref{gkkp}, we may deduce from this that $\sigma$ itself is an isomorphism.
This ends the proof.
\end{proof}

Recall that we are currently proving \cref{main 3} and hence we also assume that the reflexive Poincar\'e lemma holds on $X$ and that the corresponding spectral sequence~\labelcref{hdr} degenerates at the first page in degree $n$.

\begin{clm}[Relative Poincar\'e lemma on $\frX$ over $S$] \label{rpl}
The complex $\Omegar{\frX/S}\bullet$ is a resolution of the constant sheaf $\underline{A}_\frX$.
\end{clm}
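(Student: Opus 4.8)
The plan is to read the claim off the reflexive Poincar\'e lemma on $X$---which we are assuming throughout the proof of \cref{main 3}---after observing that, locally on $\frX$, the relative reflexive de Rham complex is simply the absolute one on $X$ tensored over $\C$ with the finite-dimensional algebra $A$. Being a resolution is the exactness of a complex of sheaves, hence a local condition on $\frX$; so by local triviality I may shrink $\frX$ and assume $\frX = V \x S$ for an open $V \subset X$, with $\mathrm{pr}_1 \from \frX \to V$ the projection.

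The first step is to identify the relative complex. Let $j \from \Reg V \inj V$ be the inclusion of the smooth locus, so that $\Omegar V\bullet = j_* \Omegap{\Reg V}\bullet$, and note that the relative smooth locus of $\frX = V \x S$ is $\Reg V \x S$. On the latter the relative de Rham complex is $\Omegap{\Reg V}\bullet \tensor_\C A$ with differential $\d \tensor \id_A$. Since $A$ is finite-dimensional over $\C$, the functor $-\tensor_\C A$ is a finite direct sum and therefore commutes with $j_*$; pushing forward yields an identification of complexes
\[ \Omegar{\frX/S}\bullet \isom \Omegar V\bullet \tensor_\C A, \qquad \d_{\frX/S} = \d \tensor \id_A, \]
and likewise $\underline A_\frX \isom \underline\C_V \tensor_\C A$ on constant sheaves, compatibly with the inclusions into degree zero.

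Granting these identifications, the claim is formal. The reflexive Poincar\'e lemma on $X$, restricted to $V$, states exactly that $\underline\C_V \to \Omegar V\bullet$ is a resolution. As $A$ is free over $\C$, the exact functor $-\tensor_\C A$ carries this to a resolution $\underline\C_V \tensor_\C A \to \Omegar V\bullet \tensor_\C A$, which is precisely the complex $\underline A_\frX \to \Omegar{\frX/S}\bullet$. This proves \cref{rpl}.

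The one point that deserves care---and which I expect to be the only real obstacle---is the identification of $\Omegar{\frX/S}\bullet$ with $\Omegar V\bullet \tensor_\C A$: it requires that forming the reflexive hull, i.e.~the pushforward $j_*$ from the smooth locus, commute with the base extension $-\tensor_\C A$, together with the matching of differentials. This is the same mechanism already exploited in \cref{531} and \cref{gkkp}. If one prefers to avoid the local reduction, one may instead reproduce their induction on $n$ verbatim, with $\frX_n$ the base change to $S_n = \Spec(A/\frm^{n+1})$: the case $n = 0$ is the reflexive Poincar\'e lemma on $X$, while the inductive step tensors the relative de Rham complex with the sequence~\labelcref{ses artin}, producing a short exact sequence of complexes whose outer terms are a finite direct sum of copies of $\Omegar X\bullet$ and the complex for $\frX_{n-1}$; these are resolutions by the base case and the inductive hypothesis, so the long exact sequence of cohomology sheaves forces the middle complex to be a resolution as well. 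I should stress that, unlike the remainder of the proof of \cref{main 3}, the degeneration of~\labelcref{hdr} plays no role here; only the reflexive Poincar\'e lemma enters.
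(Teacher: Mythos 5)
Your argument is correct and is essentially the paper's own proof: both reduce by local triviality to a product $V \times S$, identify $\Omegar{\frX/S}\bullet$ with the (flat) base change of $\Omegar V\bullet$ along $\C \to A$, and conclude from the reflexive Poincar\'e lemma on $X$ by exactness of $-\tensor_\C A$. The paper phrases the base change as $p^*\Omegar X\bullet$ for the flat projection $p$ and computes the degree-zero kernel directly as $f\inv\O S = \underline A_\frX$, but these are only cosmetic differences from your $\Omegar V\bullet \tensor_\C A$ formulation.
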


\begin{proof}
The claim is local on $\frX$.
By local triviality of $\frX \to S$, we may therefore assume that $\frX \isom X \x S$.
By assumption, $\Omegar X{\bullet}$ is exact in degree $\ge 1$.
As the projection $p \from X \x S \to X$ is flat, also $p^* \Omegar X\bullet = \Omegar{\frX/S}\bullet$ is exact in degree $\ge 1$.
The kernel of $\O\frX \to \Omegar{\frX/S}1$ consists of those functions that are pulled back from $S$, i.e.~the sheaf $f\inv \O S$.
Since $\O S = \underline{A}_S$, we have $f\inv \O S = \underline{A}_\frX$.
\end{proof}

By flatness of $\Omegar{\frX/S}p$ over $S$ and by~\cite[Ch.~III, Thm.~4.1]{BS76}, there are bounded complexes of finitely generated free $A$-modules $L_p^\bullet$, $1 \le p \le n$, such that
\begin{align} \label{Romanian dudes}
\RR q.f.\big( \Omegar{\frX/S}p \tensor f^* Q \big). & = \mathrm H^q(L_p^\bullet \tensor Q)
\end{align}
for any $q \ge 0$ and any finitely generated $A$-module $Q$.
We will apply these statements to $Q = A/\frm = \C$.
Let $\ell(-)$ denote the length of an $A$-module, and recall that this is additive on short exact sequences.
By~\cite[(3.5)]{Deligne68}, we have the following inequality:
\begin{equation} \label{765}
\ell \big( \mathrm H^q(L_p^\bullet) \big) \le \ell(A) \cdot \dim_\C \mathrm H^q(L_p^\bullet \tensor \C) \qquad \text{for all $q \ge 0$ and $1 \le p \le n$.}
\end{equation}
Furthermore, as soon as we can show that equality holds for $(p, q) = (n - 1, 1)$, it follows from~\cite[(3.5)]{Deligne68} again that $\RR1.f.\Omegar{\frX/S}{n-1}.$ is locally free and commutes with base change, and the proof of \cref{main 3} will be finished.
In other words, using~\labelcref{Romanian dudes}, we have to show that
\begin{equation} \label{1008}
\ell \big( \RR1.f.\Omegar{\frX/S}{n-1}. \big) = \ell(A) \cdot \dim_{\C}\HH1.X.\Omegar X{n-1}..
\end{equation}
To this end, consider the following chain of inequalities:
\begin{align*}
\ell \big( \RR n.f.\Omegar{\frX/S}{\bullet}. \big) & \le \sum_{p+q=n} \ell \big( \RR q.f.\Omegar{\frX/S}p. \big) && \\
& \le \ell(A) \cdot \sum_{p+q=n} \hh q.X.\Omegar Xp. && \text{by~\labelcref{Romanian dudes} and~\labelcref{765}} \\
& \le \ell(A) \cdot \hh n.X.\C. && \text{partial degeneration of~\labelcref{hdr}} \\
& = \ell \big( \HH n.\frX.{\underline{A}_\frX}. \! \big) && \text{universal coefficient theorem.}
\end{align*}
The first inequality comes from the spectral sequence associated to the stupid filtration on the complex $\Omegar{\frX/S}{\bullet}$.
By \cref{rpl} we have $\RR n.f.\Omegar{\frX/S}{\bullet}. \isom \HH n.\frX.{\underline{A}_\frX}.$, and therefore equality holds everywhere in the above chain of inequalities.
In particular, for $(p, q) = (n - 1, 1)$ we get the desired equality~\labelcref{1008}. \qed

\subsection{Proof of \cref{cor 4}}

Let $X$ be a normal compact \kahler space with quotient or toroidal singularities and $K_X \sim_\Q 0$.
Consider the index one cover $X_1 \to X$.
Then $K_{X_1} \sim 0$ and $X_1$ has the same kind of singularities by \cref{index 1} below.
By \cref{deflt quotient}, smoothness of $\Deflt(X_1)$ implies the same property for~$X$.

\begin{prp} \label{index 1}
Let $X$ be a normal complex space such that $K_X \sim_\Q 0$, and let $X_1 \to X$ be the index one cover.
If $X$ has only quotient (resp.~toroidal) singularities, then the same is true for $X_1$.
\end{prp}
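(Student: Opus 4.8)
The plan is to reduce the statement to a purely local assertion and then handle the two kinds of singularities separately. Both "quotient" and "toroidal" are properties of analytic germs, so it suffices to fix a point $y \in X_1$ lying over $x \in X$ and show that the germ $(X_1, y)$ is of the required type. Since $X_1$ is normal, this germ is irreducible, and because $X_1 \to X$ is quasi-\'etale (this is exactly where $K_X \sim_\Q 0$ enters, just as in the proof of \cref{hard lefschetz}), the induced map of germs $(X_1, y) \to (X, x)$ is a connected finite cover that is \'etale in codimension one. The whole proposition therefore follows from the claim that a connected quasi-\'etale cover of a quotient (resp.\ toroidal) germ is again a quotient (resp.\ toroidal) germ; I do not even need to identify $X_1$ as any particular cover.

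For the quotient case I would argue by purity of the branch locus. After factoring out the normal subgroup generated by pseudo-reflections (Chevalley--Shephard--Todd), I may write $(X, x) \isom (\C^n / G, 0)$ with $G$ finite acting freely in codimension one, so that the quotient map $p \from \C^n \to X$ is itself quasi-\'etale. Let $W$ be the normalization of the fibre product $X_1 \x_X \C^n$. Then $W \to \C^n$ is finite and \'etale in codimension one over the smooth space $\C^n$, hence \'etale everywhere by purity; as a small ball is simply connected, $W$ is locally a disjoint union of copies of $\C^n$. The group $G$ acts on $W$ with $W / G = X_1$, so the germ of $X_1$ at a preimage of $y$ is the quotient of $\C^n$ by the stabilizer of the relevant component, which is a quotient singularity.

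For the toroidal case I would use the dictionary between toric covers and sublattices. By the very definition of a toroidal singularity a torus-fixed point is present, so I may write $(X, x) \isom (U_\sigma, 0)$ with $\sigma \subset N_\R$ a full-dimensional strongly convex rational cone. Restricting the cover to the dense open torus $T_N \subset U_\sigma$, a connected finite \'etale cover of $T_N$ has the form $T_{N'} \to T_N$ for a finite-index sublattice $N' \subseteq N$; since $X_1$ is the normalization of $U_\sigma$ in the function field $\C(T_{N'})$, it must coincide with the toric variety $U_{\sigma, N'}$ attached to the same cone in the refined lattice $N'$. The requirement that the cover be unramified along each toric boundary divisor $D_\rho$ is the condition $u_\rho \in N'$ on primitive ray generators, which holds automatically here; in any case $U_{\sigma, N'}$ is toric and the preimage of $0$ is its unique torus-fixed point $y$, so $(X_1, y)$ is a toroidal germ.

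The main obstacle is the toroidal case, specifically justifying that quasi-\'etale covers of the germ $(U_\sigma, 0)$ correspond exactly to finite-index sublattices of $N$. One must check that passing to the \emph{analytic} germ introduces no extra covers and that the governing local fundamental group $N / \langle u_\rho \mid \rho \rangle$ is finite (which uses full-dimensionality of $\sigma$, so that the $u_\rho$ span $N_\R$); I expect to settle this by comparing analytic and algebraic finite covers via Riemann existence. The quotient case is more routine, the only delicate point being the reduction to a group acting freely in codimension one, without which purity does not apply.
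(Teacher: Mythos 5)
Your proof is correct, but in the toroidal case it takes a genuinely different route from the paper's. You reduce everything to the general local statement that a connected quasi-\'etale cover of a quotient (resp.\ toroidal) germ is again of the same type, never identifying $X_1$ as a particular cover. The paper instead constructs the index one cover of the local toric model explicitly: the $\Q$-Gorenstein hypothesis yields $m_0 \in M_\Q$ with $\langle m_0, u_\rho \rangle = 1$ for all rays $\rho$, one sets $\wt N \defn m_0\inv(\Z)$, and the index one cover is exhibited as the toric morphism $U_{\sigma,\wt N} \to U_{\sigma,N}$, whose quasi-\'etaleness is checked by comparing the pullback of $\divisor(\chi^{rm_0})$ with the invariant divisors. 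That computation is self-contained within toric geometry and gives $K_{X_1} \sim 0$ on the nose; your argument proves something stronger (any connected finite cover of the germ that is \'etale over the torus is of the form $U_{\sigma,N'}$ for a finite-index sublattice $N' \subseteq N$), but at the cost of the topological input you yourself flag: the identification $\pi_1(U \cap T_N) \isom N$ for a cofinal family of neighbourhoods $U$ of the fixed point (which uses full-dimensionality of $\sigma$), invariance of $\pi_1$ under removing the codimension-two locus where the cover may fail to be \'etale, and a Grauert--Remmert/Riemann-existence step matching the analytic cover of the germ with the algebraic cover $U_{\sigma,N'}$. These points are standard and your sketch of them is sound, but they must be cited or proved for the argument to be complete; note also that the finiteness of $N/\langle u_\rho \rangle$ you mention is not actually needed, since the given cover is finite to begin with. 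In the quotient case the paper simply cites the literature, and your purity-of-branch-locus argument (after passing to a small group via Chevalley--Shephard--Todd) is precisely the standard proof of that cited lemma, so there the two approaches coincide in substance.
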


\begin{proof}
For quotient singularities, the statement is well-known and can in fact easily be deduced from~\cite[Lemma~2.9]{BochnerCGGN}.
In the toroidal case, we will freely use standard facts and notation from toric geometry as expounded e.g.~in~\cite{CoxLittleSchenck}.
Assume now that $X$ has toroidal singularities, and fix $x \in X$.
Then there is a euclidean open neighborhood $x \in U \subset X$ and an isomorphism $\phi \from U \bij V \subset X'$, where $X'$ is toric and $\phi(x)$ is a torus-fixed point.
By Sumihiro's theorem, we may shrink $X'$ around $\phi(x)$ and assume that $X'$ is affine and $K_{X'} \sim_\Q 0$.
In the remainder of the proof, we will explain the toric construction of the index one cover of $X'$.
We abuse notation and replace $X$ by $X'$.

So let $N, M$ be dual lattices and let $\sigma \subset N_\R$ be a strictly convex rational polyhedral cone with $X = U_{\sigma, N} = \Spec \C[\sigma\dual \cap M]$.
Let $\sigma(1)$ be the set of extremal rays of $\sigma$ and $u_\rho$ the minimal generator of $\rho \cap N$, for any $\rho \in \sigma(1)$.
Since $X$ is in particular \Q-Gorenstein, there exists $m_0 \in M_\Q$ such that $\tw m_0, u_\rho. = 1$ for all $\rho \in \sigma(1)$.
If $r$ is the index of $K_X$, then $rm_0 \in M$ and $r$ is the smallest positive integer with this property.

Set $\wt N \defn m_0\inv(\Z) \subset N$, considering $m_0$ as a map $N \to \Z \cdot \frac1r \subset \Q$.
Then $\wt N$ contains the set $\set{ u_\rho \;|\; \rho \in \sigma(1) }$ and $\factor N{\wt N} \cong \factor {\Z \frac1r}\Z \cong \factor \Z{r\Z}$ is finite cyclic.
In particular $\wt N_\R = N_\R$, $M \subset \wt M$ has finite index and $M_\Q = \wt M_\Q$.
The inclusion $\sigma\dual \cap M \subset \sigma\dual \cap \wt M$ gives rise to a finite cyclic degree $r$ Galois cover
\[ \pi \from X_1 \defn U_{\sigma, \wt N} = \Spec \C[\sigma\dual \cap \wt M] \lto X. \]
Note that when passing from $N$ to $\wt N$, neither $\sigma(1)$ nor the individual $u_\rho$ change.
Therefore, since $m_0 \in \wt M \subset \wt M_\Q$ and we still have $\tw m_0, u_\rho. = 1$, it follows that $K_{X_1}$ is Cartier.
Better still, $K_{X_1} \sim 0$ as $X_1$ is affine.

It remains to show that $\pi$ is quasi-\'etale.
The $T_N$-invariant prime divisors on $X$ are in bijection with the $T_{\wt N}$-invariant prime divisors on $X_1$ because both are in bijection with $\sigma(1)$ by the Orbit--Cone correspondence.
Recall that this correspondence sends $\rho$ to $D_{\rho, X} \defn$ the closure of the orbit of $\gamma_{\rho, X}$, where $\gamma_{\rho, X}$ is the $t \to 0$ limit of the $1$-parameter subgroup $\C^* \to X$ defined by $u_\rho$, and analogously for $D_{\rho, X_1}$.
As $\pi(\gamma_{\rho, X_1}) = \gamma_{\rho_, X}$, it follows that $\pi(D_{\rho, X_1}) = D_{\rho, X}$ set-theoretically.
Even better, since $\pi$ is surjective and the number of invariant divisors on $X$ and on $X_1$ is the same, we must have $\pi\inv(D_{\rho, X}) = D_{\rho, X_1}$ set-theoretically.
In other words, $\pi^*(D_{\rho, X}) = \mu_\rho \cdot D_{\rho, X_1}$ as divisors, and we need to show that all $\mu_\rho = 1$.

Now the divisor of the character $\chi$ corresponding to $rm_0 \in M$ is
\[ \divisor(\chi) = \sum_{\rho \in \sigma(1)} r D_{\rho, X}, \]
and using that $\divisor(\pi^* \chi) = \pi^* \divisor(\chi)$ we get
\[ \sum_{\rho \in \sigma(1)} r D_{\rho, X_1} = \sum_{\rho \in \sigma(1)} r \mu_\rho D_{\rho, X_1}. \]
Comparing coefficients, it follows that $\mu_\rho = 1$ for all $\rho \in \sigma(1)$, as desired.
\end{proof}

\begin{prp} \label{deflt quotient}
Let $Y$ be a normal compact complex space, and let $G$ be a finite group acting effectively on $Y$.
Assume that $\Deflt(Y)$ is smooth.
Then there is a \lt deformation $\pi \from \frY \to \Delta$ of $Y$ over a smooth base $0 \in \Delta$ such that:
\begin{enumerate}
\item The group $G$ acts on $\frY$ in a fibre-preserving way, extending the given action $G \acts Y$.
\item The Kodaira--Spencer map $\kappa(\pi) \from T_0 \Delta \to \HH1.Y.\T Y.$ is an isomorphism onto $\HH1.Y.\T Y.^G \subset \HH1.Y.\T Y.$.
\end{enumerate}
Put $X \defn \factor YG$ and $\phi \from \frX \defn \factor\frY G \to \Delta$.
Then:
\begin{enumerate}
\item The map $\phi$ is a \lt deformation of $X$.
\item If the action of $G$ on $Y$ is free in codimension one, then $\phi$ is the semiuniversal \lt deformation of $X$.
In particular, $\Deflt(X)$ is smooth in this case.
\end{enumerate}
\end{prp}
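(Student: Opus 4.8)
The plan is to build the equivariant family $\pi\from\frY\to\Delta$ by restricting the semiuniversal \lt deformation of $Y$ to its $G$-fixed locus, and then to descend to the quotient. First I would invoke \cite[Cor.~0.3]{FK87} to obtain the semiuniversal \lt deformation $\frY^{\mathrm{univ}}\to B\defn\Deflt(Y)$, where $B$ is smooth by hypothesis. The $G$-action on $Y$ induces, by functoriality of the semiuniversal deformation, an action of the finite group $G$ on the pair $(\frY^{\mathrm{univ}},B)$ fixing $0\in B$; since $B$ is smooth I would linearize this action near $0$ (Bochner), so that $T_0B=\HH1.Y.\T Y.$ becomes the natural $G$-representation. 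Setting $\Delta\defn B^G$, the fixed locus is smooth with $T_0\Delta=\HH1.Y.\T Y.^G$, and $\frY\defn\frY^{\mathrm{univ}}|_\Delta\to\Delta$ carries a fibre-preserving $G$-action (trivial on $\Delta$) extending $G\acts Y$, whose Kodaira--Spencer map is the inclusion of $\HH1.Y.\T Y.^G$. This yields assertions~(1) and~(2) of the first list.

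Next I would form $\phi\from\frX=\frY/G\to\Delta$ and show it is a \lt deformation of $X$. As $\frY\to\Delta$ is \lt and $G$ acts trivially on $\Delta$, near a point $y$ of the central fibre with stabiliser $G_y$ one has a local product $U\isom(U\cap Y)\x\Delta^\circ$; averaging such a trivialization over the finite group $G_y$ makes it $G_y$-equivariant, and spreading it over the $G$-orbit of $y$ produces a $G$-equivariant local product structure. Taking the quotient then presents $\frX$ locally as $\big((U\cap Y)/G_y\big)\x\Delta^\circ$, that is, as an open subset of $X$ times $\Delta^\circ$, so $\phi$ is \lt with central fibre $X$. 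Note this step uses only finiteness of $G$, not the codimension hypothesis.

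For the last assertion I would assume $G$ acts freely in codimension one, so $\pi\from Y\to X$ is quasi-\'etale. Arguing as in the proof of \cref{hard lefschetz}, $\T X\isom(\pi_*\T Y)^G$ as reflexive sheaves; since finite pushforward is exact and taking $G$-invariants is an exact functor \cite{Mas99}, this gives $\HH1.X.\T X.\isom\HH1.Y.\T Y.^G=T_0\Delta$ (see also \cite[Lemma~5.3]{AlgApprox}), so $\kappa(\phi)$ is bijective. It then remains to prove versality of $\phi$, which I would deduce from an equivalence between the \lt deformation functor $D_X^{\mathrm{lt}}$ and the $G$-equivariant \lt deformation functor $D_Y^{\mathrm{lt},G}$. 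In one direction this is the quotient construction above; in the other, given a \lt deformation $\frX_A\to\Spec A$ of $X$, I would extend the finite \'etale $G$-cover $Y^\circ\to X^\circ$ over the quasi-\'etale locus (whose complement has codimension $\ge2$) using the invariance of the \'etale site under the nilpotent thickening $\Spec A$, and take the relative normalization $\frY_A\to\frX_A$; by \lt-ness this cover is locally a product, hence a $G$-equivariant \lt deformation of $Y$. These constructions are mutually inverse, so the functors are equivalent. A routine equivariant version of \cite[Cor.~0.3]{FK87} (again by averaging over $G$) shows that $\frY\to\Delta$ is semiuniversal for $D_Y^{\mathrm{lt},G}$; transporting this along the equivalence shows that $\phi$ is semiuniversal for $D_X^{\mathrm{lt}}$, whence $\Deflt(X)\isom\Delta$ is smooth.

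The main obstacle will be the versality step, specifically making precise that the quasi-\'etale cover $Y\to X$ extends uniquely across an arbitrary \lt deformation of $X$ and returns a \lt deformation of $Y$. The codimension-one freeness is essential here: it guarantees that the cover is governed by a representation of $\piet{X^\circ}$ with no ramification divisor that could deform on its own, so that the invariance of finite \'etale covers under infinitesimal thickenings applies; \lt-ness is what ensures the extended cover is again flat and locally a product rather than merely finite. By contrast, the construction of $\frY\to\Delta$ and the local triviality of $\phi$ are soft, relying only on finiteness of $G$ and averaging.
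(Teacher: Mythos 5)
Your proposal reconstructs, in essentially the same way, the argument of \cite[Prop.~6.1 and Prop.~6.2]{AlgApprox}, which is all the paper's own proof consists of (a citation plus the remark that the argument carries over verbatim to \lt deformations of singular $Y$): fixed locus of the $G$-action on the semiuniversal base, equivariant local trivializations to descend to the quotient, and extension of the quasi-\'etale cover across \lt deformations of $X$ to prove semiuniversality. The one imprecise step is the claim that ``averaging'' a local trivialization over $G_y$ makes it equivariant --- one cannot average isomorphisms of complex spaces, and the $G_y$-equivariant product structure is instead obtained by linearizing the finite stabilizer action on the germ --- but this does not change the route.
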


\begin{proof}
In the case that $Y$ is smooth, this is~\cite[Prop.~6.1, Prop.~6.2]{AlgApprox}.
For general $Y$, the proof still works because we only consider \emph{\lt} deformations of $Y$.
\end{proof}

By the above argument, we may replace $X$ by $X_1$ and assume $K_X \sim 0$.
Our aim is to apply \cref{main 3}, so we first need to know that the reflexive Poincar\'e lemma holds on $X$.
For quotient singularities, this is~\cite[Cor.~1.9]{Steenbrink77}.
In the toroidal case, we use~\cite[Lemma~2.3]{Danilov91} instead.
Note that these references work in an algebraic context, i.e.~$X$ is assumed to be an algebraic variety.
This, however, is not a problem since the claim and its proof are analytically local.

We also need to know that~\labelcref{hdr} degenerates at $E_1$ in degree $n = \dim X$.
In our situation, this degeneration even holds in any degree: in the orbifold case, apply~\cite[Thm.~1.12]{Steenbrink77}, and for toroidal spaces, use~\cite[Thm.~3.4.a)]{Danilov91}.
Again, the results are stated only for algebraic varieties.
But a close inspection of the proofs reveals that this assumption is, in both cases, only invoked once in a substantial way: $X$ has a resolution of singularities on which~\labelcref{hdr} degenerates.
Since this still holds true for compact \kahler spaces, degeneration remains valid in our setting.

It is well-known that quotient singularities as well as \Q-Gorenstein toroidal singularities are klt~\cite[Prop.~5.13]{KM98}, \cite[Cor.~11.4.25(b)]{CoxLittleSchenck}.
In particular, they are rational~\cite[Thm.~5.22]{KM98}.
\cref{cor 4} is thus a consequence of \cref{main 3}. \qed

\subsection{Proof of \cref{cor 5}}

This immediately follows from Theorems~\labelcref{main unobstructed},~\labelcref{main 2} and \cref{cor 4}. \qed

\providecommand{\bysame}{\leavevmode\hbox to3em{\hrulefill}\thinspace}
\providecommand{\MR}{\relax\ifhmode\unskip\space\fi MR}
% \MRhref is called by the amsart/book/proc definition of \MR.
\providecommand{\MRhref}[2]{%
  \href{http://www.ams.org/mathscinet-getitem?mr=#1}{#2}
}
\providecommand{\href}[2]{#2}

\end{document}